\pdfoutput=1
\documentclass[reqno]{amsart}

\usepackage[T1]{fontenc}
\usepackage[utf8]{inputenc}
\usepackage{lmodern}
\usepackage{microtype}
\usepackage{amsmath,amssymb,mathtools}
\usepackage{booktabs,tabularx,array}
\usepackage{enumitem}
\usepackage{graphicx}
\usepackage[table]{xcolor}
\usepackage{hyperref}
\usepackage{xurl}

\hypersetup{
  colorlinks=true,
  linkcolor=blue,
  citecolor=blue,
  urlcolor=blue,
  pdftitle={The critical mixed-arithmetic four-point HRT theorem},
  pdfauthor={Vignon Oussa},
  pdfsubject={Finite Gabor systems and the HRT problem},
  pdfkeywords={finite Gabor systems, time-frequency shifts, Zak transform,
  irrational rotations, holonomy, Laurent polynomials}
}

\definecolor{HRTHeader}{HTML}{E9ECEF}
\definecolor{HRTGeometry}{HTML}{F4F5F6}
\definecolor{HRTPositive}{HTML}{E3F3E8}
\definecolor{HRTPartial}{HTML}{FFF1C7}
\definecolor{HRTPresent}{HTML}{DCEBFA}
\definecolor{HRTCounter}{HTML}{F8DDDF}

\newtheorem{theorem}{Theorem}[section]
\newtheorem{proposition}[theorem]{Proposition}
\newtheorem{lemma}[theorem]{Lemma}
\newtheorem{corollary}[theorem]{Corollary}
\theoremstyle{remark}
\newtheorem{remark}[theorem]{Remark}
\numberwithin{equation}{section}

\newcommand{\R}{\mathbb{R}}
\newcommand{\Q}{\mathbb{Q}}
\newcommand{\Z}{\mathbb{Z}}
\newcommand{\T}{\mathbb{T}}
\newcommand{\C}{\mathbb{C}}
\newcommand{\Sp}{\operatorname{Sp}}
\newcommand{\SL}{\operatorname{SL}}
\newcommand{\Var}{\operatorname{Var}}
\newcommand{\wind}{\operatorname{wind}}
\newcommand{\Hol}{\operatorname{Hol}}
\newcommand{\dd}{\,\mathrm{d}}

\newcommand{\certarchive}{\nolinkurl{CriticalMixedHRT_Lean_Integrated_3Point_4Point_2026-08-23.zip}}

\title[The critical mixed-arithmetic four-point HRT theorem]
{The critical mixed-arithmetic four-point HRT theorem}

\author{Vignon Oussa}
\address{Department of Mathematics, Bridgewater State University,
Bridgewater, Massachusetts 02325, USA}
\email{voussa@bridgew.edu}
\thanks{This work was supported by the National Science Foundation under
Grant DMS-2205852.}

\date{August 23, 2026}

\subjclass[2020]{Primary 42C15; Secondary 42C40, 37A05}
\keywords{finite Gabor systems, time-frequency shifts, Zak transform,
irrational rotations, holonomy, Laurent polynomials}

\begin{document}

\begin{abstract}
Although the Heil--Ramanathan--Topiwala conjecture is false in full
generality, its failure makes the classification of positive geometric and
arithmetic regimes more urgent. We settle the critical mixed-arithmetic
regime for four time-frequency shifts. For
\(z=(x,\omega)\) and \(w=(y,\eta)\), set
\(\sigma(z,w)=x\eta-y\omega\). Let \(u,v\in\R^2\) satisfy
\(\lvert\sigma(u,v)\rvert=1\), write \(\nu=\alpha u+\beta v\), and assume
that \(0,u,v,\nu\) are distinct. If
\[
  \dim_{\Q}\operatorname{span}_{\Q}\{1,\alpha,\beta\}=2,
\]
then, for every nonzero \(f\in L^2(\R)\), the vectors
\(f,\pi(u)f,\pi(v)f,\pi(\nu)f\) are linearly independent. This includes both
four-point configurations in Chris Heil's Conjecture~9.2. The proof first
converts a putative dependence into a scalar cocycle over an irrational
rotation. On a positive-measure family of zero-free fibres, winding and
continued-fraction returns force the periodic holonomy to be constant. The
return multiplier is Laurent polynomial, however, so its ungauged holonomy is
algebraic; the constant-holonomy law simultaneously makes it an irrational
character. These conclusions are incompatible. The principal theorem and its
complete formal dependency chain have been fully certified end-to-end in
Lean~4, including the physical all-nonzero case, the complete three-point HRT
theorem, the coefficient-reduction step, and the final four-point conclusion.
The accompanying
\href{https://www.dropbox.com/scl/fi/jd9xeqqera147wdc82r8k/3f260460-07d5-45f5-8d1c-ee1d2879c0af-aristotle-32-.tar.gz?rlkey=a24njadfds5ty9ez473lk03lv\&e=1\&dl=0}
{Lean~4 certification archive} contains reproducible source, pinned build
instructions, and transitive axiom audits.
\end{abstract}

\dedicatory{Dedicated to Christopher Heil, whose clarity, persistence, and
generosity have sustained the HRT problem and the community around it.}

\maketitle
\enlargethispage{2pt}

\section{Introduction}

\subsection{A conjecture can fail without coming to an end}

For \(x,\omega\in\R\), define the translation and modulation operators by
\[
  T_xf(t)=f(t-x),
  \qquad
  M_\omega f(t)=e^{-2\pi i\omega t}f(t),
\]
and write
\[
  \pi(x,\omega)=M_\omega T_x
\]
for the corresponding time-frequency shift. The
Heil--Ramanathan--Topiwala conjecture asserted that, for every nonzero
\(f\in L^2(\R)\) and every finite set of distinct points
\(\Lambda\subset\R^2\), the finite Gabor system
\[
  \mathcal G(f,\Lambda)
  =
  \{\pi(\lambda)f:\lambda\in\Lambda\}
\]
is linearly independent \cite{HRT,HeilSurvey}.

For nearly three decades, the conjecture remained consistent with every
verified case. The positive evidence accumulated along several complementary
directions. Linnell proved linear independence whenever the configuration is
contained in a discrete subgroup of phase space, a lattice theorem later
revisited and extend through ergodic, \(L^p\), and twisted-algebra methods
\cite{Linnell,AntezanaBrunaPujals,EnstadVanVelthoven}. On the geometric side,
Demeter obtained decisive four-point results for configurations on two
parallel lines; Demeter and Zaharescu settled the full \((2,2)\) class; and Liu
proved the conjecture for almost every \((1,3)\) configuration
\cite{Demeter,DemeterZaharescu,Liu}. Parallel work established independence
for broad classes of windows characterized by rapid decay or controlled
asymptotic behavior, as well as for sufficiently separated configurations
\cite{BownikSpeegle,BenedettoBourouihiya,Kreisel}. Extension and restriction
principles, the Heisenberg-group reformulation, and the interpolation
viewpoint placed these partial theorems within broader structural frameworks
\cite{OkoudjouExtension,CurreyOussa,Berge}. Taken together, these results did
more than sustain the problem: they strengthened the expectation that linear
independence reflected a universal rigidity of time-frequency translation.
That expectation changed abruptly in August 2026. On August 5, Faulhuber,
Petersen, van Velthoven, and Voigtlaender posted a twelve-point counterexample
generated by a Schwartz window \cite{FaulhuberEtAl}. On August 6, the author
posted an intrinsically subcritical four-point counterexample
\cite{OussaCounterexample}. Because every system of at most three distinct
time-frequency shifts of a nonzero \(L^2\) function is linearly independent,
four is the least cardinality at which dependence can occur. On August 8,
Dai, Deng, Shi, Wu, and Yang independently posted a second four-point
construction \cite{DaiEtAl}. Thus, within three days, the problem moved from
having no known counterexample to failing at the first possible cardinality.
The regularity was nearly as striking as the cardinality: these constructions
used Schwartz windows. Jasper and Mixon soon obtained counterexamples with
exponential tails in the Roumieu Gelfand--Shilov class \(S^1_1(\R)\)
\cite{JasperMixon}, and the author subsequently showed that this sharp
coordinatewise regularity threshold can be attained simultaneously with the
minimal four-point cardinality \cite{OussaSharpThreshold}. The conjecture had
therefore failed not only in its full \(L^2\) generality, but also within
classes of exceptionally regular functions.

It would be natural to interpret these developments as the end of the HRT
story. They are not! (See Table \ref{tab:four-point-nine-cell-taxonomy}) What failed was the universal quantifier, not the
mathematical structure that had made the conjecture compelling. Once one
knows that dependence can occur, the geometry and arithmetic of the
underlying configuration become more important, not less. The
counterexamples close the original question, but in doing so they uncover a
more precise one:

\begin{quote}
\textit{For which configurations does every nonzero \(f\in L^2(\R)\) generate a
linearly independent finite Gabor system, and which geometric and arithmetic
features determine the answer?}
\end{quote}

While the original conjecture asked whether every finite configuration possesses
this property, the emerging classification problem asks exactly which
configurations possess it and why. A universal conjecture has come to an
end, but the mathematical program concealed within it has only now become
fully visible (see Table \ref{tab:four-point-nine-cell-taxonomy}).

\subsection{A personal path through the problem}

For me, the present theorem is inseparable from the personal history through
which the HRT problem entered, and ultimately remained, in my mathematical
life. Some problems are encountered only briefly; others gradually become
part of the way one thinks. I first learned of the HRT conjecture as a
graduate student at Saint Louis University, when Darrin Speegle brought it to
my attention. Its statement was elementary enough to be understood
immediately, yet its resistance to every available method suggested the
presence of a deeper structure that had not yet been uncovered.

In subsequent discussions, my (former) doctoral advisor Bradley Currey and I reformulated the
conjecture as an equivalent question concerning the linear independence of
finite families of co-central translates of square-integrable functions on
the Heisenberg group \cite{CurreyOussa}. That reformulation changed my
understanding of the problem: The HRT conjecture no longer appeared to be an
isolated question about translations and modulations on the real line. It
became a structural problem about noncommutative translation and the
representation theory of the Heisenberg group; the analytic question had
thereby acquired an algebraic/representation theoretic home.

Kasso Okoudjou later introduced a complementary inductive viewpoint through
his extension and restriction principles \cite{OkoudjouExtension}: one begins
with a configuration already known to be independent and asks where one more
point may be placed without destroying that property. In our subsequent joint
work, we specialized this viewpoint to mixed-integer configurations, composed
of lattice points together with one off-lattice, or rogue, point, and used the
Zak transform to relate the rational rank of that point to the geometry of its
zero set \cite{OkoudjouOussa}. For Schwartz and continuous Wiener-amalgam
windows, that analysis settled the rational-rank-three case and showed that
any rank-two dependence would require an infinite Zak zero set.

The orbit geometry revealed there led to my trichotomy paper, first circulated
in August 2025, revised in March 2026, and forthcoming in the Springer memorial volume honoring Jean-Pierre Gabardo \cite{OussaTrichotomy}. For Schwartz windows, that work ruled out the dense and finite orbit branches as possible
sources of counterexamples and placed the remaining infinite proper branch
under necessary saturation, zero-average, cohomological, small-divisor, and
arithmetic rigidity constraints. It did not close that branch; it identified
the precise conditions governing what remained. The present theorem begins at
this frontier, combining the earlier dynamical reduction with new holonomy and
Laurent-algebra arguments to close the critical four-point rank-two cell
considered here. These successive developments form the immediate conceptual
ancestry of this paper. 

A difficult problem, however, does not remain alive merely because it is
difficult. It remains alive because someone continues to clarify what is
known, isolate what remains unresolved, and formulate questions precise
enough to guide future work. For the HRT conjecture, Christopher Heil has
played that role for decades. As one of the three mathematicians who
introduced the conjecture, he helped establish the problem itself; through
his exceptionally lucid expository work, he also gave the subject a common
language. More importantly, he repeatedly transformed a distant universal
question into concrete configurations on which genuine progress could be
made.

Among the most enduring of these formulations is
\href{https://heil.math.gatech.edu/papers/hrtnotes.pdf\#page=3}
{Conjecture~9.2 in Heil's survey}
\cite[Conjecture~9.2, p.~173]{HeilSurvey}. Heil isolated there two explicit
four-point configurations that remained unresolved even for continuous
windows. After translating his positive-modulation convention into ours,
their common three-point core generates the critical lattice \(\Z^2\), while
their fourth points are \((\sqrt2,-\sqrt2)\) and \((\pi,0)\). Relative to the
marked critical lattice, the corresponding rational spans have dimension two.
Thus both configurations belong to the same mixed-arithmetic cell of
Table~\ref{tab:four-point-nine-cell-taxonomy}, although they were posed before
the taxonomy governing that cell had been articulated. Theorem~\ref{thm:main}
settles the entire cell rather than only these two distinguished
configurations. Section~\ref{sec:heil-worked} then returns to part~(a) of
Heil's conjecture and works through that single configuration in detail,
making visible each passage from phase-space geometry to the final holonomy
contradiction.

It is therefore fitting to dedicate this paper to Christopher Heil, whose
clarity, persistence, and generosity did more than preserve the HRT
conjecture: they gave the community concrete frontiers along which the deeper
structure of the problem could gradually emerge. The universal conjecture
may now be false, but the classification program that his questions helped
to anticipate remains very much alive.

\subsection{The two-step trichotomy and the critical rank-two cell}

Let
\[
  \sigma((x,\omega),(y,\eta))=x\eta-y\omega
\]
be the standard symplectic form on \(\R^2\). The four-point problem admits a
natural marked normal form. One may begin by translating one point to the origin. If the points
are collinear, Fourier uniqueness settles the problem; otherwise, choose two
linearly independent differences \(u,v\). They generate a full-rank lattice
\[
  \mathcal L=\Z u+\Z v,
\]
and the remaining point can be written uniquely as
\[
  \nu=\alpha u+\beta v.
\]
The point \(\nu\) is the \emph{rogue point} relative to the marked lattice. This normalization reveals two independent trichotomies. The first is
geometric:
\[
  \delta=|\sigma(u,v)|
  \quad
  \begin{cases}
    >1,&\text{supercritical},\\
    =1,&\text{critical},\\
    <1,&\text{subcritical}.
  \end{cases}
\]
The second is arithmetic:
\[
  \rho
  =\dim_{\Q}\operatorname{span}_{\Q}\{1,\alpha,\beta\}
  \in\{1,2,3\}.
\]
The cases \(\rho=1,2,3\) are called, respectively, \emph{rational},
\emph{mixed arithmetic}, and \emph{maximally irrational}. Together, these
two trichotomies organize the marked four-point problem into the nine cells
displayed in Table~\ref{tab:four-point-nine-cell-taxonomy}. The two
off-critical mixed-arithmetic cells retain partial, rather than complete,
classifications.

\begin{table}[htbp]
\centering
\caption{The geometric--arithmetic nine-cell taxonomy for the marked
four-point HRT problem.}
\label{tab:four-point-nine-cell-taxonomy}

\small
\renewcommand{\arraystretch}{1.25}
\begin{tabularx}{0.99\textwidth}{
  >{\raggedright\arraybackslash}p{0.16\textwidth}
  >{\raggedright\arraybackslash}X
  >{\raggedright\arraybackslash}X
  >{\raggedright\arraybackslash}X}
\toprule
\rowcolor{HRTHeader}
\textbf{Geometry}
& \(\boldsymbol{\rho=1}\) \textbf{rational}
& \(\boldsymbol{\rho=2}\) \textbf{mixed}
& \(\boldsymbol{\rho=3}\) \textbf{maximally irrational}\\
\midrule

\cellcolor{HRTGeometry}\(\delta>1\)\newline Supercritical
& \cellcolor{HRTPositive}
  Lattice reduction; independence for every nonzero \(L^2\) window
  \cite{Linnell}.
& \cellcolor{HRTPartial}
  Partial positive classification \cite{OussaTrichotomy}.
& \cellcolor{HRTPositive}
  Independence for every nonzero \(L^2\) window
  \cite{OussaCertified}.\\

\cellcolor{HRTGeometry}\(\delta=1\)\newline Critical
& \cellcolor{HRTPositive}
  Lattice reduction; independence for every nonzero \(L^2\) window
  \cite{Linnell}.
& \cellcolor{HRTPresent}
  \textbf{Present theorem:} independence for every nonzero \(L^2\) window.
& \cellcolor{HRTPositive}
  Independence for every nonzero \(L^2\) window
  \cite{OussaCriticalRankThree}.\\

\cellcolor{HRTGeometry}\(\delta<1\)\newline Subcritical
& \cellcolor{HRTPositive}
  Lattice reduction; independence for every nonzero \(L^2\) window
  \cite{Linnell}.
& \cellcolor{HRTPartial}
  Partial positive classification \cite{OussaTrichotomy}.
& \cellcolor{HRTCounter}
  Four-point counterexamples
  \cite{OussaCounterexample,DaiEtAl}.\\
\bottomrule
\end{tabularx}

\vspace{4pt}
\footnotesize
\fcolorbox{black!30}{HRTPositive}{\strut Proved positive regime}\quad
\fcolorbox{black!30}{HRTPartial}{\strut Partial classification}\\[3pt]
\fcolorbox{black!30}{HRTPresent}{\strut Present theorem}\quad
\fcolorbox{black!30}{HRTCounter}{\strut Counterexample regime}
\end{table}

The present article occupies the critical mixed-arithmetic cell
\[
  |\sigma(u,v)|=1,
  \qquad
  \dim_{\Q}\operatorname{span}_{\Q}\{1,\alpha,\beta\}=2.
\]
The second condition says that \((\alpha,\beta)\) is neither rational nor
maximally irrational over \(\Q\): the space of rational relations among
\(1,\alpha,\beta\) is one-dimensional. After a unimodular change of the
critical lattice basis, the rogue point therefore has coordinates
\[
  \left(a,\frac{p}{m}\right),
  \qquad a\notin\Q.
\]

The main theorem proves independence for every nonzero \(L^2\) window in this
entire cell, which includes both explicit configurations posed in
Conjecture~9.2 of Heil's survey
\cite[Conjecture~9.2, p.~173]{HeilSurvey}. Earlier work of Okoudjou and the
author showed that, for continuous Wiener-amalgam windows, a rank-two
dependence would force the zero set of the Zak transform to be infinite
\cite{OkoudjouOussa}. The present result is stronger in two directions: (a) it
imposes no continuity or decay hypothesis on the window, and (b) it establishes
full linear independence rather than a necessary condition on a Zak zero
set. It is also geometrically disjoint from the intrinsically subcritical
counterexamples, for which every nonzero symplectic triangle has area
strictly smaller than one \cite{OussaCounterexample}.

\subsection{Why holonomy enters}

The principal difficulty is that an \(L^2\) Zak transform need not be
continuous and may vanish on large sets. Pointwise propagation of a single
Zak zero, which is useful for smoother windows, is therefore unavailable. The
argument must instead use information that survives at the measurable level.

The rational coordinate of the rogue point provides the first essential
mechanism. Iterating the Zak equation exactly \(m\) times closes the rational
frequency displacement and leaves an irrational translation
\(x\mapsto x-\theta\) on each circle fibre, where \(\theta=ma\notin\Q\).
On fibres where the return multiplier does not vanish, the Zak equation
becomes a scalar measurable cocycle
\[
  h(x-\theta)=b(x)h(x).
\]
Such an equation imposes a topological constraint: the loop \(b\) must have
zero winding. It then admits a periodic logarithm, and one may define its
holonomy by exponentiating the mean of that logarithm.

The second mechanism comes from continued fractions. Along the convergent
denominators \(q_n\) of \(\theta\), the rotation nearly returns to the
identity. This forces the holonomy to lie in the thin resonance group
\[
  G_\theta=\{\zeta\in S^1:\zeta^{q_n}\to1\}.
\]
The Zak sewing law makes each fibre quasiperiodic in \(x\). Removing its
known sewing factor produces an equivalent periodic cocycle whose holonomy
depends real-analytically on the transverse fibre parameter. A nonconstant
analytic curve cannot meet the Haar-null group \(G_\theta\) on a set of
positive measure, so this periodic holonomy must be constant.

Finally, the original return multiplier is a Laurent polynomial in the two
torus variables. Jensen's formula and root separation imply that its holonomy
is algebraic in the transverse variable. Restoring the removed sewing factor
transforms the preceding constant into an irrational character, which cannot
satisfy a nontrivial Laurent-polynomial relation. The resulting contradiction
completes the proof.

\subsection{Proof architecture}

For clarity, the proof is organized according to the distinct role of each
tool.

\begin{center}
\small
\renewcommand{\arraystretch}{1.18}
\begin{tabularx}{0.96\textwidth}{>{\raggedright\arraybackslash}p{0.24\textwidth}X}
\toprule
\textbf{Stage} & \textbf{Mathematical role}\\
\midrule
Symplectic arithmetic
& Converts rational rank two into one irrational and one rational coordinate.\\
Zak transform
& Turns a four-term dependence into a scalar functional equation and an exact
finite-step return multiplier.\\
Finite torus geometry
& Removes only finitely many bad fibres and produces a zero-free active arc.\\
Measurable dynamics
& Forces zero winding and quantizes holonomy along continued-fraction returns.\\
Analytic rigidity
& Converts positive-measure resonance into constancy of the periodic holonomy.\\
Laurent algebra
& Produces an algebraic relation contradicted by the resulting irrational
character.\\
\bottomrule
\end{tabularx}
\end{center}

Section~\ref{sec:statement} states the theorem and its consequence for Heil's
two configurations. Section~\ref{sec:preliminaries} fixes conventions and
records the covariance, Zak-transform, winding, and holonomy conventions.
Sections~\ref{sec:normal-return} and \ref{sec:active} produce the return
cocycle and the active zero-free arc.
Sections~\ref{sec:winding} and \ref{sec:quantization} establish the measurable
winding and holonomy restrictions. Section~\ref{sec:laurent} proves the
Laurent certificate, and Section~\ref{sec:main-proof} assembles the
contradiction. Section~\ref{sec:heil-worked} gives one complete worked
application, namely the \(\sqrt2\)-configuration in Heil's
Conjecture~9.2(a).

\section{Statement of the result}
\label{sec:statement}

\begin{theorem}[Critical mixed-arithmetic four-point theorem]
\label{thm:main}
Let \(u,v\in\R^2\) satisfy \(\lvert\sigma(u,v)\rvert=1\), and write
\[
  \nu=\alpha u+\beta v.
\]
Assume that \(0,u,v,\nu\) are distinct and that
\[
  \dim_{\Q}\operatorname{span}_{\Q}\{1,\alpha,\beta\}=2.
\]
Then, for every nonzero \(f\in L^2(\R)\), the four vectors
\[
  f,\qquad \pi(u)f,\qquad \pi(v)f,\qquad \pi(\nu)f
\]
are linearly independent.
\end{theorem}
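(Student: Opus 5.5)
The plan is to follow the architecture sketched in the introduction, and to make each stage explicit.

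\emph{Step 1: normal form.} First I put the configuration into normal form. A metaplectic operator $\mu(A)$ with $A\in\SL(2,\R)$ (possibly composed with a reflection, to handle $\sigma(u,v)=-1$) satisfies $\mu(A)\pi(z)\mu(A)^{-1}=c\,\pi(Az)$ with $|c|=1$. Applying it, I may assume $u=(1,0)$ and $v=(0,1)$, so $\mathcal L=\Z^2$ is the critical lattice. Since $\rho=2$, the rational relation space of $\{1,\alpha,\beta\}$ is one-dimensional; write it as $k_0+k_1\alpha+k_2\beta=0$ with $\gcd(k_1,k_2)=1$. Completing $(k_1,k_2)$ to a matrix in $\SL(2,\Z)$ gives a unimodular change of lattice basis, implemented metaplectically up to phases. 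This preserves the critical lattice and puts the rogue point at $\nu=(a,p/m)$ with $a\notin\Q$ and $\gcd(p,m)=1$. Swapping the roles via the Fourier transform if necessary, I fix this orientation.

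\emph{Step 2: Zak transform and return cocycle.} Suppose $c_0f+c_1T_1f+c_2M_1f+c_3\pi(\nu)f=0$. By the three-point theorem, $c_3\neq0$. Apply the Zak transform $Zf(x,\xi)$, under which $T_1$ and $M_1$ become multiplication by characters in $e^{2\pi i\xi}$ and $e^{2\pi ix}$. This gives
\[
P(x,\xi)\,Zf(x,\xi)=e^{\ast}\,Zf(x-a,\xi-p/m),
\]
where $P$ is a trigonometric (Laurent) polynomial in $(e^{2\pi ix},e^{2\pi i\xi})$. Iterating this $m$ times closes the frequency shift, up to sewing factors from the quasiperiodicity of $Zf$. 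It yields, on each fibre $\xi$, an equation $B_\xi(x)\,Zf(x,\xi)=\chi\,Zf(x-\theta,\xi)$ with $\theta=ma\notin\Q$. Here $B_\xi=\prod_j P(\cdot-ja,\xi-jp/m)$ is again Laurent polynomial. A nonzero Laurent polynomial on $\T^2$ vanishes on a curve, so for all but finitely many $\xi$ the zero set of $B_\xi$ in $x$ is finite. Since $f\neq0$, the set of $\xi$ with $Zf(\cdot,\xi)\not\equiv0$ has positive measure. By ergodicity of the irrational rotation, $|Zf(\cdot,\xi)|$ is positive almost everywhere on such fibres. From these I extract a positive-measure family (an ``active arc'') of $\xi$ on which $h_\xi=Zf(\cdot,\xi)$ solves a scalar cocycle $h(x-\theta)=b_\xi(x)h(x)$, with $b_\xi$ zero-free.

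\emph{Step 3: winding, holonomy, rigidity, contradiction.} A measurable solution that is nonzero almost everywhere forces $\wind b_\xi=0$. Take the modulus: $\log|b_\xi|$ is a coboundary, so its mean vanishes, which by Jensen is a Mahler-measure identity. Next take the phase: after de-gauging by the sewing factor $e^{2\pi i\cdot(\text{linear in }x)}$, one gets a periodic cocycle with continuous logarithm, whose holonomy $H(\xi)=\exp(\text{mean of }\log b_\xi)$ is defined. Evaluating the cocycle along the Birkhoff sums $S_{q_n}$, and using the Denjoy--Koksma bound for a continuous function of bounded variation, gives $H(\xi)^{q_n}\to1$ in measure. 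Hence $H(\xi)\in G_\theta$, a Haar-null subgroup. Since $H$ is real-analytic in $\xi$ on the active arc, and a nonconstant analytic curve meets a null set in a null set of parameters, $H$ is constant. Finally, Jensen's formula expresses the ungauged holonomy via the roots of $B_\xi$ in $e^{2\pi ix}$, which are algebraic functions of $e^{2\pi i\xi}$; on a subarc with separated roots this makes it an algebraic function of $e^{2\pi i\xi}$. Restoring the sewing factor, constancy says this equals a constant times $e^{2\pi i\lambda\xi}$ with $\lambda$ irrational, built from $\theta$. An irrational character is not algebraic over $\C(e^{2\pi i\xi})$, since it has infinite monodromy. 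This is the contradiction.

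\emph{Main obstacle.} I expect the hardest step to be the precise bookkeeping of the sewing factor and of the Jensen computation: showing that the gauged holonomy is analytic in $\xi$ while the ungauged one is algebraic, with the exponent $\lambda$ genuinely irrational. The quantization ``$H^{q_n}\to1$'' for merely measurable $h$ also needs care. There I would first try to pass to $h/|h|$, and then use $L^1$-continuity of translation together with $\|q_n\theta\|\to0$.
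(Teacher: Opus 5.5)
Your architecture matches the paper's: normal form, $m$-step Zak return with $\theta=ma$, winding obstruction, periodicizing gauge, quantization into $G_\theta$, analytic rigidity, Jensen root product, irrational character. However, the step that produces zero-free fibres is unjustified, and as you argue it, it would fail.

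You say that a nonzero Laurent polynomial on $\T^2$ vanishes on a curve, hence all but finitely many fibres $B_\xi$ have finitely many zeros in $x$. That suffices for the ergodicity step. But the winding number, the periodic logarithm, the homotopy argument spreading $\wind=0$ across the arc, and the joint analytic logarithm all require $B_\xi(x)\neq0$ for \emph{every} $x$ and every $\xi$ in an open arc meeting the active set in positive measure. If the zero set of $B$ on $\T^2$ really were a curve, its projection to the $\xi$-circle could be all of $\T$ (as for $z+z^{-1}+w+w^{-1}$), and no zero-free fibre would exist. Your sentence extracting an arc ``with $b_\xi$ zero-free'' is therefore unsupported.

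The missing ingredient is Lemma~\ref{lem:trinomial-zeros}. After normalization the trinomial is $P=c_0+c_1\chi_{\lambda_1}+c_2\chi_{\lambda_2}$ with $(\lambda_1,\lambda_2)$ a unimodular basis of $\Z^2$. It is not $(1,0),(0,1)$: your $\SL(2,\Z)$ change of basis moves $u$ and $v$, so writing $T_1f$, $M_1f$ in Step~2 is inconsistent. In the torus coordinates $(z_1,z_2)=(\chi_{\lambda_1},\chi_{\lambda_2})$, a zero forces $|c_0+c_1z_1|=|c_2|$. This is a nonconstant first-harmonic equation in $z_1$ with at most two solutions, after which $z_2$ is determined. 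The argument uses $c_0c_1c_2\neq0$, i.e.\ the three-point theorem for all coefficients, not only $c_3\neq0$; with $c_2=0$ and $|c_0|=|c_1|$ the zero set is a whole circle. Since $B$ is a monomial times $m$ translates of $P$, it has at most $2m$ zeros on $\T^2$. So the exceptional fibres form a finite set $E$, and some component of $\T\setminus E$ meets the active set in positive measure.

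There are also two smaller points. First, Denjoy--Koksma alone only bounds the centred sums $\sum_{j<q_n}\phi(x-j\theta)$. To conclude $H^{q_n}\to1$ you need them to tend to zero uniformly. The paper gets this in Lemma~\ref{lem:centered-return} by approximating $\phi$ in variation with zero-mean trigonometric polynomials and using $(1-e^{-2\pi ikq_n\theta})/(1-e^{-2\pi ik\theta})\to0$ mode by mode. Second, closing the frequency shift after $m$ steps produces no sewing factor, because $Zf$ is $1$-periodic in the frequency variable. The phase $e^{2\pi i s\theta}$ enters only through the periodicizing gauge in $x$, and that is what converts the constant periodic holonomy into the irrational character. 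Your monodromy argument for non-algebraicity of that character, after analytic continuation from the arc, is a valid substitute for the paper's distinct-frequency argument.
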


The following consequence identifies precisely the connection with Heil's
original four-point question. Notice that Heil uses the positive modulation
\(e^{2\pi i\omega t}\), whereas our convention is
\(M_\omega f(t)=e^{-2\pi i\omega t}f(t)\).

\begin{corollary}[Heil's Conjecture 9.2]
\label{cor:heil}
For every nonzero \(g\in L^2(\R)\), each of the following sets is linearly
independent:
\begin{align*}
  &\left\{
    g(t),\ g(t-1),\ e^{2\pi it}g(t),\
    e^{2\pi i\sqrt2\,t}g(t-\sqrt2)
  \right\},\\
  &\left\{
    g(t),\ g(t-1),\ e^{2\pi it}g(t),\ g(t-\pi)
  \right\}.
\end{align*}
\end{corollary}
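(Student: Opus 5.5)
The corollary follows from Theorem~\ref{thm:main} once each of Heil's configurations is written in the marked normal form and the two hypotheses of the theorem are checked. There are no analytic steps; the only care needed is the sign convention for modulations.

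First, I translate the functions into our operators. With \(\pi(x,\omega)g(t)=e^{-2\pi i\omega t}g(t-x)\), we have
\[
e^{2\pi i\omega t}g(t-x)=\pi(x,-\omega)g(t),
\]
an exact identity with no extra phase factor. Hence:
\begin{itemize}
\item \(g(t)=\pi(0,0)g\) and \(g(t-1)=\pi(1,0)g\);
\item \(e^{2\pi i t}g(t)=\pi(0,-1)g\);
\item \(e^{2\pi i\sqrt2\,t}g(t-\sqrt2)=\pi(\sqrt2,-\sqrt2)g\);
\item \(g(t-\pi)=\pi(\pi,0)g\).
\end{itemize}
So both sets have the form \(\{g,\pi(u)g,\pi(v)g,\pi(\nu)g\}\) with
\[
u=(1,0),\qquad v=(0,-1).
\]
The rogue point is \(\nu=(\sqrt2,-\sqrt2)\) in the first set and \(\nu=(\pi,0)\) in the second.

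Next, I verify criticality. We have \(\sigma(u,v)=1\cdot(-1)-0\cdot 0=-1\), so \(\lvert\sigma(u,v)\rvert=1\). Since \(u,v\) form the standard basis up to a sign, the coordinates of \(\nu\) are read off directly.

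For the first configuration, \(\nu=\sqrt2\,u+\sqrt2\,v\), so \(\alpha=\beta=\sqrt2\). Then \(\operatorname{span}_{\Q}\{1,\sqrt2,\sqrt2\}\) has dimension exactly \(2\), because \(\sqrt2\notin\Q\).

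For the second configuration, \(\nu=\pi u+0\cdot v\), so \(\alpha=\pi\) and \(\beta=0\). Then \(\operatorname{span}_{\Q}\{1,\pi,0\}\) has dimension \(2\), because \(\pi\) is irrational.

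In both cases the points \(0,u,v,\nu\) are distinct. Indeed, \(\nu\) has an irrational coordinate, while \(0,u,v\) are pairwise distinct lattice points.

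Finally, Theorem~\ref{thm:main} applies with \(f=g\ne0\) and gives linear independence of both sets. The only place an error could enter is the sign convention: a wrong sign in \(v\) or \(\nu\) would change \(\sigma\) only by a sign and leave \(\rho\) unchanged, so the conclusion is robust to it.
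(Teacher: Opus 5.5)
Your argument is correct and matches the paper's proof: it uses the same basis \(u=(1,0)\), \(v=(0,-1)\), the same decompositions \(\nu=\sqrt2\,u+\sqrt2\,v\) and \(\nu=\pi u+0v\), and the same rational-span count, and then appeals to Theorem~\ref{thm:main}. Your explicit translation of the positive-modulation convention and your check that \(0,u,v,\nu\) are distinct spell out what the paper leaves implicit.
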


\begin{proof}
Set \(u=(1,0)\) and \(v=(0,-1)\). Then
\(|\sigma(u,v)|=1\). In the first configuration, the fourth point is
\[
  (\sqrt2,-\sqrt2)=\sqrt2\,u+\sqrt2\,v,
\]
so the relevant rational span is \(\operatorname{span}_{\Q}\{1,\sqrt2\}\).
In the second it is
\[
  (\pi,0)=\pi u+0v,
\]
so the rational span is \(\operatorname{span}_{\Q}\{1,\pi\}\). Both spans
have dimension two, and Theorem~\ref{thm:main} applies.
\end{proof}

\begin{remark}[Compatibility with four-point counterexamples]
The hypothesis \(|\sigma(u,v)|=1\) is essential to the theorem proved here.
The intrinsically subcritical counterexample in \cite{OussaCounterexample}
has every nonzero triangle area strictly between zero and one. Therefore, it
lies outside the critical cell of Theorem~\ref{thm:main}.
\end{remark}

\section{Preliminaries and conventions}
\label{sec:preliminaries}

Throughout, \(\T=\R/\Z\) is equipped with normalized Haar measure.

\subsection{Projective and symplectic covariance}

For \(z=(x,\omega)\) and \(w=(y,\eta)\), direct calculation gives
\begin{equation}
  \pi(z)\pi(w)
  =e^{2\pi i\eta x}\pi(z+w),
  \label{eq:weyl-product}
\end{equation}
and hence
\begin{equation}
  \pi(z)\pi(w)
  =e^{2\pi i\sigma(z,w)}\pi(w)\pi(z).
  \label{eq:weyl-commutator}
\end{equation}
We use the standard covariance of finite Gabor linear dependence under
translations of the parameter set and under symplectic changes of variables.
In dimension one,
\[
  \Sp(2,\R)=\SL(2,\R),
\]
and these changes are implemented, up to scalar phases, by the metaplectic
representation \cite[Chapter~4]{Folland}. Scalar phases do not affect linear
dependence.

We also use the classical three-point theorem: any three distinct
time-frequency shifts of a nonzero \(L^2(\R)\) function are linearly
independent \cite{HRT,HeilSurvey}. Consequently, every coefficient in a
four-term dependence among four distinct shifts must be nonzero.

\subsection{The Zak transform}

For a rapidly decaying function \(f\), define
\[
  Zf(x,s)=\sum_{k\in\Z}f(x+k)e^{-2\pi i k s}.
\]
The Zak transform extends uniquely to a unitary map from \(L^2(\R)\) onto the
standard Zak space. Its sewing relations are
\begin{equation}
  Zf(x+1,s)=e^{2\pi i s}Zf(x,s),
  \qquad
  Zf(x,s+1)=Zf(x,s),
  \label{eq:zak-sewing}
\end{equation}
and its Plancherel identity is
\begin{equation}
  \int_0^1\int_0^1|Zf(x,s)|^2\dd x\dd s
  =\|f\|_2^2.
  \label{eq:zak-plancherel}
\end{equation}
For arbitrary \(a,b\in\R\), one has
\begin{equation}
  Z(M_bT_af)(x,s)
  =e^{-2\pi i b x}Zf(x-a,s+b).
  \label{eq:zak-general-covariance}
\end{equation}
If \((a,b)=(r,t)\in\Z^2\), the sewing relations reduce this to
\begin{equation}
  Z(M_tT_rf)(x,s)
  =e^{-2\pi i(tx+rs)}Zf(x,s).
  \label{eq:zak-integer-covariance}
\end{equation}
These statements are standard; see \cite[Chapters~7--8]{Grochenig} and
\cite[Chapter~1]{Folland}. For general \(f\in L^2(\R)\), the identities hold
as identities of \(L^2\) classes and therefore almost everywhere after
representatives are chosen.

\subsection{Winding and holonomy}

Let \(b:\T\to\C^\times\) be continuous. Its winding number around the origin
is denoted by \(\wind(b)\). The following facts will be used repeatedly:
\begin{enumerate}[label=(\roman*)]
  \item \(\wind(b)=0\) if and only if \(b\) admits a continuous periodic
  logarithm \(\ell\), so that \(b=e^\ell\);
  \item if \(b\) is real analytic and zero-free, the logarithm may be chosen
  real analytic;
  \item the winding number is locally constant in a zero-free continuous
  family of loops.
\end{enumerate}
When \(\wind(b)=0\), define its \emph{holonomy}: the exponential of the
mean of a periodic logarithm by
\begin{equation}
  \Hol(b)
  =\exp\left(\int_0^1\ell(x)\dd x\right).
  \label{eq:holonomy-def}
\end{equation}
This is independent of the periodic logarithm: two such logarithms differ by
an integral multiple of \(2\pi i\), whose exponential is one.

\section{Normal form and the exact return cocycle}
\label{sec:normal-return}

The rational-rank hypothesis becomes useful only after it is converted into a
normal form compatible with the critical lattice. We make both the orientation
and the action of the integer matrix explicit.

\begin{proposition}[Integral mixed normal form]
\label{prop:normal-form}
Assume
\[
  |\sigma(u,v)|=1,
  \qquad
  \dim_{\Q}\operatorname{span}_{\Q}\{1,\alpha,\beta\}=2,
  \qquad
  \nu=\alpha u+\beta v.
\]
After a symplectic change of variables, and after absorbing harmless scalar
phases into the coefficients of a possible dependence, the four points may be
written as
\[
  0,\qquad
  \lambda_1,\qquad
  \lambda_2,\qquad
  \left(a,\frac{p}{m}\right),
\]
where \(\lambda_1,\lambda_2\) form a \(\Z\)-basis of \(\Z^2\),
\[
  a\notin\Q,
  \qquad p\in\Z,
  \qquad m\in\mathbb N,
  \qquad \gcd(|p|,m)=1.
\]
\end{proposition}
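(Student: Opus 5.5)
The plan is to build the symplectic change of variables in two stages. The first stage moves the marked lattice to \(\Z^2\). The second stage applies an element of \(\SL(2,\Z)\), which preserves \(\Z^2\), so that the rational relation among \(1,\alpha,\beta\) lands entirely in the second coordinate of the rogue point.

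\textbf{Stage one: normalize the lattice.} If \(\sigma(u,v)=-1\), swap \(u\) and \(v\). This swaps \(\alpha\) and \(\beta\) but leaves \(\operatorname{span}_{\Q}\{1,\alpha,\beta\}\) unchanged, so we may assume \(\sigma(u,v)=1\). Let \(A\) be the linear map with \(Au=(1,0)\) and \(Av=(0,1)\). Because \(\sigma(Au,Av)=\sigma((1,0),(0,1))=1=\sigma(u,v)\) and \(\sigma\) is the determinant form on \(\R^2\), \(A\) lies in \(\SL(2,\R)=\Sp(2,\R)\). By the covariance recalled in Section~\ref{sec:preliminaries}, the metaplectic operator \(\mu(A)\) satisfies \(\mu(A)\pi(z)\mu(A)^{-1}=c_z\,\pi(Az)\) with \(|c_z|=1\). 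Applying \(\mu(A)\) to a dependence \(\sum c_j\pi(z_j)f=0\) therefore gives a dependence among \(\pi(Az_j)\mu(A)f\), with coefficients multiplied by phases; the new window \(\mu(A)f\) is still nonzero. After this step the points are \(0\), \((1,0)\), \((0,1)\) and \((\alpha,\beta)\).

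\textbf{Stage two: isolate the rational relation.} Since \(\rho=2\), the \(\Q\)-relations \((r_0,r_1,r_2)\) with \(r_0+r_1\alpha+r_2\beta=0\) form a one-dimensional space. Take a generator and clear denominators to get a primitive integer relation \(k_0+k_1\alpha+k_2\beta=0\).

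First, \((k_1,k_2)\neq(0,0)\). Otherwise \(k_0=0\) and the relation would be trivial.

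Next, let \(d=\gcd(k_1,k_2)\) and write \(k_1=de\), \(k_2=df\) with \(\gcd(e,f)=1\). Choose integers \(g,h\) with \(eh-fg=1\), and set
\[
B=\begin{pmatrix} h & -g\\ e & f\end{pmatrix}\in\SL(2,\Z).
\]
Then \(B(\alpha,\beta)=(h\alpha-g\beta,\ e\alpha+f\beta)\). The second coordinate is \(e\alpha+f\beta=-k_0/d\), which is rational. Reduce this fraction to \(p/m\) with \(m\ge1\) and \(\gcd(|p|,m)=1\).

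The first coordinate is \(a=h\alpha-g\beta\). Suppose \(a\) were rational. Then both \(h\alpha-g\beta\) and \(e\alpha+f\beta\) would be rational, and \(B\) is invertible over \(\Z\), so \(\alpha,\beta\in\Q\). That gives \(\rho=1\), a contradiction, so \(a\notin\Q\).

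\textbf{Conclusion.} Apply \(\mu(B)\) as in stage one. Set \(\lambda_1=B(1,0)=(h,e)\) and \(\lambda_2=B(0,1)=(-g,f)\). These are the columns of a unimodular integer matrix, so they form a \(\Z\)-basis of \(\Z^2\). The origin stays fixed, the four points remain distinct because \(B\) is injective, and all coefficient changes are unimodular phases.

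The only delicate point is the bookkeeping of phases and orientation: \(\SL(2,\R)\) requires \(\det=+1\), which is exactly why the swap in stage one is needed. Stage two is elementary integer linear algebra, since the one-dimensionality of the relation space lets a single Bezout matrix do all the work.
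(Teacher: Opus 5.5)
Your argument follows the paper's proof. You fix the orientation by a swap and apply $S^{-1}$ to send the configuration to $0,e_1,e_2,(\alpha,\beta)$. You then apply an $\SL(2,\Z)$ B\'ezout matrix whose second row is the primitive relation vector, so the rogue point acquires a rational second coordinate and an irrational first coordinate.

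There is one concrete arithmetic slip in the B\'ezout step. For $B=\begin{pmatrix} h & -g\\ e & f\end{pmatrix}$ one has $\det B=hf+ge$, not $eh-fg$, so your stated condition does not make $B$ unimodular. For example, take $e=1$, $f=0$ (the case $\alpha\in\Q$), $h=1$, $g=0$. These satisfy $eh-fg=1$, but they give $B=\begin{pmatrix}1&0\\1&0\end{pmatrix}$, which is singular. They also give $a=h\alpha-g\beta=\alpha\in\Q$. So the symplecticity, the $\Z$-basis claim for $\lambda_1,\lambda_2$, and the irrationality argument for $a$ all fail as written. The fix is to choose $g,h\in\Z$ with $ge+hf=1$, which B\'ezout supplies because $\gcd(e,f)=1$. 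Then $B\in\SL(2,\Z)$, and the rest of your argument goes through unchanged.
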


\begin{proof}
\textbf{Step 1: Fix the orientation.}
If \(\sigma(u,v)=-1\), interchange \(u\) and \(v\) and simultaneously
interchange \(\alpha\) and \(\beta\). We may therefore assume
\(\sigma(u,v)=1\). Let \(S\) be the matrix with columns \(u\) and \(v\).
Then \(S\in\SL(2,\R)\), and the symplectic map \(S^{-1}\) sends the
configuration to
\[
  0,\qquad e_1,\qquad e_2,\qquad (\alpha,\beta).
\]

\smallskip
\noindent
\textbf{Step 2: Extract the rational direction.}
Rational rank two gives a nontrivial rational relation among
\(1,\alpha,\beta\). After clearing denominators, there are
\((k_1,k_2)\in\Z^2\setminus\{0\}\) and \(r\in\Q\) such that
\[
  k_1\alpha+k_2\beta=r.
\]
Divide \((k_1,k_2)\) by its greatest common divisor, so that
\(k=(k_1,k_2)\) is primitive.

\smallskip
\noindent
\textbf{Step 3: Complete the rational direction to an integral basis.}
By B\'ezout's identity, choose \(\ell=(\ell_1,\ell_2)\in\Z^2\) so that
\[
  A=
  \begin{pmatrix}
    \ell_1&\ell_2\\
    k_1&k_2
  \end{pmatrix}
  \in\SL(2,\Z).
\]
Then
\[
  A\binom{\alpha}{\beta}
  =\binom{a}{r},
  \qquad
  a=\ell_1\alpha+\ell_2\beta.
\]
The number \(a\) is irrational. Indeed, if \(a\in\Q\), then both
coordinates of \(A(\alpha,\beta)^T\) would be rational; since
\(A^{-1}\in\SL(2,\Z)\), this would force \(\alpha,\beta\in\Q\), reducing the
rational rank to one.

\smallskip
\noindent
\textbf{Step 4: Apply the integral symplectic map.}
Write \(r=p/m\) in lowest terms with \(m\geq1\). Applying \(A\) sends the
two standard lattice vectors to
\[
  \lambda_1=Ae_1,
  \qquad
  \lambda_2=Ae_2,
\]
which form a unimodular basis of \(\Z^2\), and it sends the fourth point to
\((a,p/m)\). The composite \(AS^{-1}\) is symplectic. Metaplectic covariance
transports linear dependence through this change of variables, with scalar
phases absorbed into the coefficients.
\end{proof}

Suppose, toward a contradiction, that Theorem~\ref{thm:main} fails. After the
preceding reduction, there are coefficients \(c_0,c_1,c_2,c_3\in\C\) and a
nonzero \(f\in L^2(\R)\) such that
\begin{equation}
  c_0f+c_1\pi(\lambda_1)f+c_2\pi(\lambda_2)f
  +c_3\pi\left(a,\frac{p}{m}\right)f=0.
  \label{eq:four-term-relation}
\end{equation}
The four points remain distinct. The three-point theorem therefore gives
\begin{equation}
  c_0c_1c_2c_3\neq0.
  \label{eq:coeff-nonzero}
\end{equation}

Write
\[
  \lambda_j=(r_j,t_j)\in\Z^2,
  \qquad j=1,2,
\]
and put \(F=Zf\). Define the lattice trinomial
\begin{equation}
  P(x,s)
  =c_0+c_1e^{-2\pi i(t_1x+r_1s)}
       +c_2e^{-2\pi i(t_2x+r_2s)}.
  \label{eq:P-def}
\end{equation}
Applying the Zak transform to \eqref{eq:four-term-relation} gives, almost
everywhere,
\begin{equation}
  P(x,s)F(x,s)
  +c_3e^{-2\pi i(p/m)x}
   F\left(x-a,s+\frac{p}{m}\right)=0.
  \label{eq:zak-fibre}
\end{equation}
Equivalently,
\begin{equation}
  F\left(x-a,s+\frac{p}{m}\right)
  =-c_3^{-1}e^{2\pi i(p/m)x}P(x,s)F(x,s).
  \label{eq:one-step}
\end{equation}
The rational frequency coordinate now closes after exactly \(m\) steps.

\begin{proposition}[Exact return multiplier]
\label{prop:return}
Set
\[
  \theta=ma.
\]
Then \(\theta\notin\Q\), and iterating \eqref{eq:one-step} exactly \(m\)
times gives
\begin{equation}
  F(x-\theta,s)=B_s(x)F(x,s)
  \label{eq:return}
\end{equation}
almost everywhere, where
\begin{equation}
  B_s(x)
  =(-c_3^{-1})^m
   \exp\left(
      2\pi i\frac{p}{m}\sum_{j=0}^{m-1}(x-ja)
   \right)
   \prod_{j=0}^{m-1}
     P\left(x-ja,s+j\frac{p}{m}\right).
  \label{eq:B-def}
\end{equation}
Moreover, there is a nonzero Laurent polynomial
\[
  L\in\C[w^{\pm1},z^{\pm1}]
\]
such that
\begin{equation}
  B_s(x)=L(e^{2\pi i s},e^{2\pi i x}).
  \label{eq:Laurent-return}
\end{equation}
\end{proposition}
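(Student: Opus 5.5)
The plan is a direct induction on the number of iterations of \eqref{eq:one-step}, followed by a bookkeeping check that every factor is a Laurent monomial or polynomial in \(w=e^{2\pi i s}\) and \(z=e^{2\pi i x}\).

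\emph{Irrationality of \(\theta\).} Proposition~\ref{prop:normal-form} gives \(a\notin\Q\) and \(m\in\mathbb N\), so \(\theta=ma\notin\Q\).

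\emph{Iteration.} Write \(\Phi(x,s)=(x-a,s+p/m)\). For \(k\ge1\), I would prove by induction that
\[
  F\bigl(x-ka,s+kp/m\bigr)
  =(-c_3^{-1})^k\exp\Bigl(2\pi i\tfrac{p}{m}\sum_{j=0}^{k-1}(x-ja)\Bigr)
   \prod_{j=0}^{k-1}P\bigl(x-ja,s+jp/m\bigr)\,F(x,s)
\]
almost everywhere. The inductive step applies \eqref{eq:one-step} at the point \(\Phi^{k}(x,s)\). This is legitimate almost everywhere because \(\Phi\) is a translation of \(\R^2\), so it preserves Lebesgue null sets. The identity \eqref{eq:one-step} therefore holds off a null set \(N\), and the composed identity holds off \(\bigcup_{j<k}\Phi^{-j}(N)\), which is again null. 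Setting \(k=m\), the frequency shift becomes \(mp/m=p\in\Z\). The second sewing relation in \eqref{eq:zak-sewing} gives \(F(x-\theta,s+p)=F(x-\theta,s)\), and this yields \eqref{eq:return} with \(B_s\) as in \eqref{eq:B-def}. The only delicate point here is the measure-theoretic care in composing a.e.\ identities, and I expect it to be routine.

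\emph{Laurent structure.} Each factor \(P(x-ja,s+jp/m)\) is
\[
  c_0+c_1e^{2\pi i(t_1ja+r_1jp/m)}z^{-t_1}w^{-r_1}+c_2e^{2\pi i(t_2ja+r_2jp/m)}z^{-t_2}w^{-r_2},
\]
which is a Laurent polynomial in \((w,z)\) with constant complex coefficients, since \(r_j,t_j\in\Z\). The exponential prefactor equals
\[
  \exp\bigl(2\pi i\,p\,x\bigr)\cdot\exp\Bigl(-2\pi i\tfrac{p}{m}a\tfrac{m(m-1)}{2}\Bigr)=C\,z^{p},
\]
because \(\tfrac{p}{m}\cdot m x=px\) with \(p\in\Z\). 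This is exactly where the choice of \(m\) steps matters: fewer steps would leave a fractional power of \(z\). Hence \(L\) is \((-c_3^{-1})^mC z^p\) times a product of \(m\) Laurent trinomials.

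\emph{Nonvanishing of \(L\).} The ring \(\C[w^{\pm1},z^{\pm1}]\) is an integral domain, so it suffices that each trinomial factor is nonzero. Here \(\lambda_1,\lambda_2\) form a \(\Z\)-basis, so the exponent vectors \((0,0)\), \((-r_1,-t_1)\), \((-r_2,-t_2)\) are pairwise distinct. Together with \(c_0c_1c_2\neq0\) from \eqref{eq:coeff-nonzero}, this shows each trinomial has three distinct monomials with nonzero coefficients, and is therefore nonzero. Since \(c_3\neq0\), the prefactor is also nonzero, so \(L\neq0\). The main point requiring attention is this distinctness-of-monomials argument, together with confirming that the \(z^p\) exponent is an integer.
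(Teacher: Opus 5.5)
Your proposal is correct and follows essentially the same route as the paper. Both iterate the one-step relation $m$ times, close the frequency shift $s+p$ by $1$-periodicity of $F$ in $s$, reduce the prefactor to a constant times $z^{p}$, and obtain $L\neq0$ from distinct monomials with nonzero coefficients in the integral domain $\C[w^{\pm1},z^{\pm1}]$. Your explicit null-set bookkeeping for composing the a.e.\ identities is a useful detail that the paper leaves implicit. There is one sign slip: the translated coefficient should be $c_1e^{2\pi i(t_1ja-r_1jp/m)}$, but this is immaterial because only its nonvanishing is used.
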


\begin{proof}
\textbf{Step 1: Close the rational displacement.}
Apply \eqref{eq:one-step} successively at
\[
  (x,s),\ (x-a,s+p/m),\ \ldots,\
  (x-(m-1)a,s+(m-1)p/m).
\]
The final point is \((x-ma,s+p)\). Since the Zak transform is periodic in its
second variable,
\[
  F(x-ma,s+p)=F(x-ma,s).
\]
Multiplication of the \(m\) scalar factors yields exactly
\eqref{eq:B-def}.

\smallskip
\noindent
\textbf{Step 2: Identify the Laurent structure.}
The exponential prefactor simplifies to
\[
  \exp\left(
    2\pi i\frac{p}{m}\sum_{j=0}^{m-1}(x-ja)
  \right)
  =e^{-\pi i p a(m-1)}e^{2\pi i p x}.
\]
It is therefore a nonzero constant times a Laurent monomial in
\(z=e^{2\pi i x}\). Each factor
\(P(x-ja,s+jp/m)\) is a Laurent polynomial in
\[
  w=e^{2\pi i s},\qquad z=e^{2\pi i x},
\]
because \(r_j,t_j\in\Z\); the translations only alter its coefficients by
nonzero constants.

\smallskip
\noindent
\textbf{Step 3: Verify nontriviality.}
The three characters occurring in \(P\) are distinct and their coefficients
are nonzero, so \(P\) is not the zero Laurent polynomial. Every translated
factor is therefore nonzero. Since
\(\C[w^{\pm1},z^{\pm1}]\) is an integral domain, their product, and hence
\(L\), is nonzero.
\end{proof}

\section{Finite exceptional fibres and the active arc}
\label{sec:active}

The return equation is useful only on fibres where \(B_s\) does not vanish.
The critical lattice hypothesis gives a decisive simplification: the zero set
of the lattice trinomial is finite rather than one-dimensional.

\begin{lemma}[Zero set of a unimodular trinomial]
\label{lem:trinomial-zeros}
Let \(\lambda_1,\lambda_2\) be a \(\Z\)-basis of \(\Z^2\), and let
\(c_0c_1c_2\neq0\). The zero set on \(\T^2\) of
\[
  c_0+c_1\chi_{\lambda_1}+c_2\chi_{\lambda_2}
\]
has at most two points.
\end{lemma}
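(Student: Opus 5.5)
Since \(\lambda_1,\lambda_2\) form a \(\Z\)-basis of \(\Z^2\), the map
\[
  \Phi:\T^2\to\T^2,\qquad (x,s)\mapsto\bigl(\chi_{\lambda_1}(x,s),\chi_{\lambda_2}(x,s)\bigr)
\]
is a continuous group automorphism of the torus, with inverse given by the inverse integer matrix. Concretely, writing \(\chi_{\lambda_j}(x,s)=e^{-2\pi i(t_jx+r_js)}\), the matrix \(\begin{pmatrix}t_1&r_1\\t_2&r_2\end{pmatrix}\) has determinant \(\pm1\), since it is obtained from the matrix with rows \(\lambda_1,\lambda_2\) by swapping columns. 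Hence \(\Phi\) is a bijection of \(\T^2\) onto \((S^1)^2\). So I will count the solutions of
\[
  c_0+c_1z_1+c_2z_2=0,\qquad |z_1|=|z_2|=1,
\]
and transport the count back through \(\Phi^{-1}\). Finiteness and the bound are preserved by the bijection.

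For the counting step, set \(A=|c_0|\), \(B=|c_1|\), \(C=|c_2|\), all positive. A solution means the three complex numbers \(c_0\), \(c_1z_1\), \(c_2z_2\), of fixed moduli \(A,B,C\), sum to zero. Equivalently, \(c_1z_1=-c_0-c_2z_2\) must have modulus \(B\). As \(z_2\) runs over \(S^1\), the point \(-c_0-c_2z_2\) traces the circle of radius \(C\) centred at \(-c_0\). The requirement \(|-c_0-c_2z_2|=B\) intersects this circle with the circle of radius \(B\) centred at the origin. The two circles have distinct centres, because \(c_0\neq0\). Two circles with distinct centres meet in at most two points, so there are at most two admissible values of \(z_2\). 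For each admissible \(z_2\), the value \(z_1=(-c_0-c_2z_2)/c_1\) is uniquely determined. Thus there are at most two solutions \((z_1,z_2)\), and there are none unless \(A,B,C\) satisfy the triangle inequalities.

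Neither step presents a real obstacle. The only point to check carefully is that unimodularity, rather than mere linear independence of \(\lambda_1,\lambda_2\), is what makes \(\Phi\) injective. For a sublattice of index \(d\), each solution would pull back to \(d\) points of \(\T^2\), giving up to \(2d\) zeros instead of two. This is where the critical hypothesis \(|\sigma(u,v)|=1\) enters, via Proposition~\ref{prop:normal-form}. I will also remark that the degenerate case of tangent circles gives one point and is included in the bound.
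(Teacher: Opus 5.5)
Your proposal is correct and takes essentially the same route as the paper. Both arguments use unimodularity to pass to the character coordinates \((z_1,z_2)\) and then count solutions of \(c_0+c_1z_1+c_2z_2=0\) on \((S^1)^2\). The paper eliminates \(z_2\) and reduces to the first-harmonic equation \(|c_0|^2+|c_1|^2+2\operatorname{Re}(\overline{c_0}c_1z_1)=|c_2|^2\) in \(z_1\); you eliminate \(z_1\) and intersect two circles with distinct centres, which is the same observation, and \(c_0\neq0\) does the same job in both.
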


\begin{proof}
\textbf{Step 1: Use the unimodular character coordinates.}
The map
\[
  \T^2\longrightarrow S^1\times S^1,
  \qquad
  z\longmapsto
  \bigl(\chi_{\lambda_1}(z),\chi_{\lambda_2}(z)\bigr)
\]
is a torus automorphism because \(\lambda_1,\lambda_2\) form a unimodular
basis. Thus it suffices to solve
\[
  c_0+c_1z_1+c_2z_2=0,
  \qquad |z_1|=|z_2|=1.
\]

\smallskip
\noindent
\textbf{Step 2: Reduce to a first harmonic.}
Every solution satisfies
\[
  |c_0+c_1z_1|=|c_2|.
\]
After squaring,
\[
  |c_0|^2+|c_1|^2
  +2\operatorname{Re}(\overline{c_0}c_1z_1)
  =|c_2|^2.
\]
Because \(\overline{c_0}c_1\neq0\), this is a nonconstant first-degree
trigonometric equation in \(z_1\), so it has at most two solutions. Once
\(z_1\) is fixed, \(z_2\) is uniquely determined.
\end{proof}

\begin{corollary}[Finite exceptional parameters]
\label{cor:exceptional}
The zero set of \((x,s)\mapsto B_s(x)\) in \(\T^2\) is finite. Hence there
is a finite set \(E\subset\T\) such that
\begin{equation}
  B_s(x)\neq0
  \qquad (x\in\T)
  \label{eq:zero-free-fibre}
\end{equation}
whenever \(s\notin E\).
\end{corollary}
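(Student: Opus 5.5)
The plan is to read the zero set of \(B\) directly off its factorized form \eqref{eq:B-def}, using Lemma~\ref{lem:trinomial-zeros} factor by factor. The constant \((-c_3^{-1})^m\) is nonzero because \(c_3\neq0\) by \eqref{eq:coeff-nonzero}. The exponential prefactor never vanishes. Hence, for \((x,s)\in\T^2\),
\[
  B_s(x)=0
  \iff
  P\Bigl(x-ja,\,s+j\tfrac{p}{m}\Bigr)=0
  \quad\text{for some } j\in\{0,\dots,m-1\}.
\]
Each factor \(P\) is \(1\)-periodic in both variables, because \(r_j,t_j\in\Z\). So the factors are genuinely functions on \(\T^2\), and \(B\) is too; this is also visible from \eqref{eq:Laurent-return}.

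First, we apply Lemma~\ref{lem:trinomial-zeros} to \(P\). Under our conventions \(P\) equals \(c_0+c_1\chi_{\lambda_1}+c_2\chi_{\lambda_2}\), where \(\chi_\lambda(x,s)=e^{-2\pi i(tx+rs)}\) for \(\lambda=(r,t)\). This differs from the lemma's characters by complex conjugation and a coordinate swap, which is still a torus automorphism. Since \(\lambda_1,\lambda_2\) is a \(\Z\)-basis and \(c_0c_1c_2\neq0\), the zero set \(Z_P\subset\T^2\) has at most two points.

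Next, the \(j\)-th factor vanishes exactly when \((x,s)\) lies in the translate \(Z_P+(ja,-jp/m)\) modulo \(\Z^2\), since translation is a bijection of \(\T^2\). The zero set of \(B\) is therefore the union of \(m\) translates of \(Z_P\), and so has at most \(2m\) points.

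Finally, let \(E\subset\T\) be the projection of this finite set onto the \(s\)-coordinate. Then \(E\) has at most \(2m\) elements. If \(s\notin E\), no point \((x,s)\) lies in the zero set, so \(B_s(x)\neq0\) for every \(x\in\T\), which is \eqref{eq:zero-free-fibre}.

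There is no real obstacle here. The only point needing care is the bookkeeping about periodicity, so that "zero set in \(\T^2\)" makes sense. One also needs the check that the character map defined by the conjugated, coordinate-swapped characters is still a torus automorphism. It is, because its integer matrix has determinant \(\pm1\).
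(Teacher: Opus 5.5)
Your proposal is correct and is essentially the paper's argument: write \(B\) as a nonvanishing monomial times \(m\) torus translates of \(P\), apply Lemma~\ref{lem:trinomial-zeros} to each translate, and project the finite union of at most \(2m\) zeros onto the \(s\)-circle. Your extra checks, that each factor is \(1\)-periodic in both variables and that the conjugated, coordinate-swapped character map has integer matrix of determinant \(\pm1\), make explicit points the paper leaves implicit.
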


\begin{proof}
Formula \eqref{eq:B-def} expresses \(B\) as a nonvanishing monomial times
finitely many torus translates of \(P\). Each translated trinomial has at most
two zeros by Lemma~\ref{lem:trinomial-zeros}. Their finite union is the zero
set of \(B\), and its projection onto the \(s\)-circle is finite.
\end{proof}

The significance of Corollary~\ref{cor:exceptional} is not merely that the
return multiplier has few zeros. Projecting those zeros onto the transverse
\(s\)-coordinate leaves only finitely many exceptional fibres. On every
remaining fibre, \(x\mapsto B_s(x)\) is a loop in \(\C^\times\), so its
winding number is defined; once that winding is shown to vanish, the loop
admits the periodic logarithm needed to define holonomy. The complete
calculation for Heil's \(\sqrt2\)-configuration is deferred to
Section~\ref{sec:heil-worked}, after the general proof has identified the role
of every ingredient.

Define the active set
\begin{equation}
  S=
  \left\{
    s\in\T:
    \int_0^1|F(x,s)|^2\dd x>0
  \right\}.
  \label{eq:active-set}
\end{equation}
By Zak Plancherel, \(|S|>0\) because \(f\neq0\). The following lemma records
the measure-theoretic reduction needed later.

\begin{lemma}[Active zero-free arc]
\label{lem:active-arc}
There exists a connected open arc \(U\subset\T\setminus E\) such that
\begin{equation}
  |S\cap U|>0.
  \label{eq:active-arc}
\end{equation}
On \(U\times\T\), the function
\[
  (s,x)\longmapsto B_s(x)
\]
is jointly real analytic and zero-free. Moreover, for almost every
\(s\in S\cap U\), the return equation \eqref{eq:return} holds for almost every
\(x\in\T\), and the fibre \(F(\cdot,s)\) is nonzero in \(L^2(\T)\).
\end{lemma}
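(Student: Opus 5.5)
The plan is to take \(E\) to be the finite set supplied by Corollary~\ref{cor:exceptional}, so that \(\T\setminus E\) is a finite union of open arcs. First I would check that \(S\) is a measurable set, determined up to a null set. Since \(F\in L^2(\T^2)\) after choosing a representative on a fundamental domain, Fubini--Tonelli shows that \(s\mapsto\int_0^1|F(x,s)|^2\dd x\) is measurable and finite almost everywhere. Zak Plancherel \eqref{eq:zak-plancherel} gives \(\int_\T\int_0^1|F|^2\,\dd x\,\dd s=\|f\|_2^2>0\), hence \(|S|>0\). Because \(E\) is finite, it is null, so \(|S\cap(\T\setminus E)|=|S|>0\). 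The complement \(\T\setminus E\) is a union of finitely many connected open arcs; if \(E=\emptyset\), I would instead remove a single point to obtain an arc. Countable (here finite) additivity then forces at least one of these arcs, call it \(U\), to satisfy \(|S\cap U|>0\). This proves \eqref{eq:active-arc}.

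Next come the analyticity and zero-freeness. By Proposition~\ref{prop:return}, \(B_s(x)=L(e^{2\pi i s},e^{2\pi i x})\) with \(L\) a Laurent polynomial, so \((s,x)\mapsto B_s(x)\) is a trigonometric polynomial on \(\T^2\) and is therefore jointly real analytic everywhere. Zero-freeness on \(U\times\T\) is exactly \eqref{eq:zero-free-fibre}, because \(U\cap E=\emptyset\).

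Finally, for the fibrewise statements, \eqref{eq:return} holds for almost every \((x,s)\in\T^2\). The set where it fails is measurable and null, so Fubini shows that for almost every \(s\) its \(x\)-section is null. Intersecting this full-measure set of \(s\) with \(S\cap U\) gives the return equation for almost every \(x\) on almost every active fibre. For these \(s\), membership in \(S\) means \(\int_0^1|F(x,s)|^2\,\dd x>0\), so \(F(\cdot,s)\) is nonzero in \(L^2(\T)\). The fibre also lies in \(L^2(\T)\), since the fibre integral is finite almost everywhere.

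The main subtlety is the measure-theoretic bookkeeping. The Zak identities hold only as \(L^2\) classes, so I would fix one representative of \(F\) at the outset. I would then verify that the iterated identity \eqref{eq:return} holds almost everywhere for that representative. This follows because each one-step identity \eqref{eq:one-step} holds off a null set, and the maps \((x,s)\mapsto(x-ja,s+jp/m)\) are measure-preserving, so they carry null sets to null sets. With that in place, the Fubini step is routine.
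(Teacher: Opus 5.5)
Your proposal is correct and follows the paper's proof essentially step for step. You choose a component of \(\T\setminus E\) (or \(\T\) minus a point when \(E=\varnothing\)) meeting \(S\) in positive measure, read off joint analyticity from the Laurent representation \eqref{eq:Laurent-return} and zero-freeness from Corollary~\ref{cor:exceptional}, and apply Fubini to the almost-everywhere identity \eqref{eq:return}; your extra remarks on the measurability of \(S\) and on fixing a representative of \(F\) are sound bookkeeping that the paper leaves implicit.
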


\begin{proof}
The complement of the finite set \(E\) has finitely many connected
components. Since \(E\) has measure zero and \(|S|>0\), one of those
components meets \(S\) in positive measure. If \(E=\varnothing\), choose a
proper open arc with positive intersection with \(S\). This gives \(U\).

The Laurent representation \eqref{eq:Laurent-return} makes \(B\) jointly real
analytic, and the choice of \(U\) makes it zero-free. Finally,
\eqref{eq:return} is an almost-everywhere identity on \(\T^2\). Fubini's
theorem therefore supplies a conull set of parameters \(s\) for which it holds
for almost every \(x\). Intersecting with \(S\cap U\) gives the last
assertion.
\end{proof}

Fix an open interval \(I\subset\R\) on which
\(s\mapsto e^{2\pi i s}\) is a homeomorphism from \(I\) onto \(U\), and
use this lift throughout the rest of the proof.

We now remove the quasiperiodicity imposed by the Zak sewing relation. For
fixed \(s\), write \(F_s(x)=F(x,s)\). A \emph{scalar gauge transformation}
multiplies the unknown fibre by a nowhere-vanishing scalar function:
\(K_s=q_sF_s\). This invertible change preserves nonzero measurable solutions
and transforms
\[
  F_s(x-\theta)=B_s(x)F_s(x)
\]
into
\[
  K_s(x-\theta)
  =\frac{q_s(x-\theta)}{q_s(x)}B_s(x)K_s(x).
\]
Here the canonical choice \(q_s(x)=e^{-2\pi i s x}\) removes exactly the Zak
sewing phase. Thus set
\begin{equation}
  K_s(x)=e^{-2\pi i s x}F(x,s).
  \label{eq:periodic-gauge}
\end{equation}
Then \(K_s(x+1)=K_s(x)\), and \eqref{eq:return} becomes
\begin{equation}
  K_s(x-\theta)=A_s(x)K_s(x),
  \qquad
  A_s(x)=e^{2\pi i s\theta}B_s(x).
  \label{eq:gauged-cocycle}
\end{equation}
We call this the \emph{periodicizing gauge}. Since \(A_s/B_s\) is independent
of \(x\), it leaves the winding number in the \(x\)-variable unchanged.
Moreover, \(|q_s|=1\), so it preserves \(L^2\)-norms and nontriviality.

\section{Irrational returns and the measurable winding obstruction}
\label{sec:winding}

\subsection{Irrational rotations and quantitative returns}

The periodicized fibre equation \eqref{eq:gauged-cocycle} has now reduced the
remaining dynamics to an irrational rotation of the \(x\)-circle. Two
classical facts enter, for distinct purposes: ergodicity controls the zero set
of a measurable solution, while continued-fraction return times control the
phase accumulated by the multiplier.

For \(\theta\notin\Q\), let
\[
  R_\theta x=x-\theta\pmod 1
\]
act on \(\T\). The rotation \(R_\theta\) is ergodic \cite{Petersen}. If
\(p_n/q_n\) are the continued-fraction convergents of \(\theta\), then
\begin{equation}
  q_n\theta-p_n\longrightarrow0.
  \label{eq:cf-return}
\end{equation}
Thus \(R_\theta^{q_n}\) approaches the identity, and the denominators
\(q_n\) provide the natural times at which an iterated cocycle can be
compared with its initial value.

At these return times, accumulated oscillation is controlled by the
Denjoy--Koksma inequality: for every real-valued periodic function \(g\) of
bounded variation,
\begin{equation}
  \sup_{x\in\T}
  \left|
    \sum_{j=0}^{q_n-1}g(x-j\theta)
    -q_n\int_\T g
  \right|
  \leq \Var(g).
  \label{eq:denjoy-koksma}
\end{equation}
For complex-valued \(g\), we apply the estimate to its real and imaginary
parts. In the winding argument below, it controls the derivative of the
accumulated phase; later, it controls the oscillatory part of a periodic
logarithm. Together with \eqref{eq:cf-return}, it converts recurrence of the
base rotation into rigidity of the cocycle. See
\cite{Khinchin,KuipersNiederreiter,KatokHasselblatt}.

\subsection{The winding obstruction}

We now isolate the first dynamical obstruction. Its significance is that it
requires no continuity of the transfer function \(h\); only the multiplier is
smooth.

\begin{lemma}[Measurable winding obstruction]
\label{lem:winding}
Let \(\theta\notin\Q\), let \(0\neq h\in L^2(\T)\), and let
\(b\in C^2(\T;\C^\times)\). If
\begin{equation}
  h(x-\theta)=b(x)h(x)
  \label{eq:abstract-cocycle}
\end{equation}
almost everywhere, then \(\wind(b)=0\).
\end{lemma}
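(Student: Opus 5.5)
The plan is to compare the $q_n$-step iterate of \eqref{eq:abstract-cocycle} along the continued-fraction denominators of $\theta$ with the $L^1$-continuity of translation. If $\wind(b)=k\neq0$, the accumulated phase of the multiplier oscillates at frequency $\asymp q_n$, while the left-hand side becomes almost invariant. The two facts are incompatible.

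\textbf{Step 1: reduce to a unimodular solution.} Taking absolute values in \eqref{eq:abstract-cocycle} shows that the zero set $\{h=0\}$ is invariant under $R_\theta$, up to null sets, because $b$ is zero-free. By ergodicity of $R_\theta$ and $h\neq0$, we get $h\neq0$ almost everywhere. Put $u=h/|h|$, a measurable unimodular function. Iterating \eqref{eq:abstract-cocycle} $q$ times gives
\[
  h(x-q\theta)=b_q(x)h(x),\qquad b_q(x)=\prod_{j=0}^{q-1}b(x-j\theta).
\]
Hence $\overline{u(x)}\,u(x-q\theta)=b_q(x)/|b_q(x)|$ almost everywhere.

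\textbf{Step 2: factor the multiplier and control the phase derivative.} Write $b(x)=e^{2\pi i k x}e^{g(x)}$ with $k=\wind(b)$ and $g\in C^2(\T;\C)$ periodic; this uses fact (i) applied to $e^{-2\pi i kx}b$. Then
\[
  b_q(x)/|b_q(x)|=c_q\,e^{i\Psi_q(x)},\qquad \Psi_q(x)=2\pi k q x+\sum_{j=0}^{q-1}\operatorname{Im}g(x-j\theta),
\]
for a unimodular constant $c_q$. The following derivative bounds are what we need:
\[
  \Psi_q'(x)=2\pi k q+\sum_{j}\operatorname{Im}g'(x-j\theta),\qquad |\Psi_q''|\le q\,\|g''\|_\infty .
\]
Since $\int_\T g'=0$, the Denjoy--Koksma inequality \eqref{eq:denjoy-koksma} applied to $\operatorname{Im}g'$ at $q=q_n$ gives $|\Psi_{q_n}'(x)-2\pi k q_n|\le \Var(g')$ uniformly in $x$. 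Thus, if $k\neq0$, for large $n$ we have $|\Psi_{q_n}'|\ge \pi|k|q_n$ on all of $\T$.

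\textbf{Step 3: contradiction.} By \eqref{eq:cf-return}, $q_n\theta\to0$ modulo $1$. Continuity of translation in $L^1(\T)$ applied to the bounded function $u$ then gives
\[
  \overline{u}\,u(\cdot-q_n\theta)\to1\quad\text{in }L^1(\T).
\]
Therefore $c_{q_n}\int_0^1 e^{i\Psi_{q_n}}\to1$. On the other hand, integrate by parts, writing $e^{i\Psi}=(e^{i\Psi})'/(i\Psi')$. Because $\Psi_{q_n}(1)-\Psi_{q_n}(0)\in 2\pi\Z$ and $\Psi_{q_n}'$ is periodic, the boundary term vanishes. This leaves
\[
  \Bigl|\int_0^1 e^{i\Psi_{q_n}}\Bigr|\le \int_0^1\frac{|\Psi_{q_n}''|}{|\Psi_{q_n}'|^2}\le \frac{q_n\|g''\|_\infty}{\pi^2k^2q_n^2}\longrightarrow0,
\]
which contradicts the previous limit. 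Hence $k=0$.

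The main obstacle is Step 2: obtaining uniform control of the \emph{derivative} of the accumulated phase along return times. Only the derivative matters, since the phase itself can drift by $O(1)$, which is harmless but not small. This is exactly where Denjoy--Koksma is used, applied to $g'$ rather than $g$. The $C^2$ hypothesis enters only through the crude bound on $\Psi''$ needed for the integration by parts, and no regularity of $h$ is needed beyond $L^1$-continuity of translation.
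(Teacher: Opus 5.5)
Your proposal is correct and follows essentially the same route as the paper: ergodicity makes \(h\) almost everywhere nonvanishing, you pass to the unimodular \(u=h/|h|\) and iterate along the convergent denominators \(q_n\), you apply Denjoy--Koksma to the derivative of the accumulated phase, and you integrate by parts with a boundary term that vanishes because \(\Psi_{q_n}(1)-\Psi_{q_n}(0)\in2\pi\Z\) and \(\Psi_{q_n}'\) is periodic. The differences from the paper are cosmetic: you use \(L^1\) rather than \(L^2\) translation continuity, you take a complex logarithm \(g\) and then imaginary parts instead of first normalizing to \(b/|b|\), and you bound the second derivative in sup-norm rather than in \(L^1\).
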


\begin{proof}
\textbf{Step 1: Remove the zero set of the transfer function.}
Because \(b\) never vanishes, the zero set of \(h\) is invariant, modulo null
sets, under the irrational rotation. Ergodicity implies that this zero set has
measure zero or one. Since \(h\neq0\), it has measure zero. Therefore
\[
  u(x)=\frac{h(x)}{|h(x)|}
\]
is defined almost everywhere and has modulus one.

\smallskip
\noindent
\textbf{Step 2: Separate the winding from the periodic phase.}
Let \(\psi=b/|b|\). Then
\[
  u(x-\theta)=\psi(x)u(x)
\]
almost everywhere and \(\wind(\psi)=\wind(b)\). If this common winding is
\(d\in\Z\), there is a real-valued periodic \(C^2\) function \(g\) such that
\[
  \psi(x)=e^{2\pi i(dx+g(x))}.
\]

\smallskip
\noindent
\textbf{Step 3: Iterate through continued-fraction returns.}
Iteration through \(q_n\) steps gives
\[
  u(x-q_n\theta)=e^{2\pi i\Phi_n(x)}u(x),
\]
where
\begin{equation}
  \Phi_n(x)
  =dq_nx-d\theta\frac{q_n(q_n-1)}2
   +\sum_{j=0}^{q_n-1}g(x-j\theta).
  \label{eq:Phi-def}
\end{equation}
Since \(|u|=1\),
\[
  |e^{2\pi i\Phi_n(x)}-1|
  =|u(x-q_n\theta)-u(x)|.
\]
By \eqref{eq:cf-return} and translation continuity in \(L^2(\T)\),
\begin{equation}
  e^{2\pi i\Phi_n}\longrightarrow1
  \quad\text{in }L^2(\T).
  \label{eq:phase-to-one}
\end{equation}
In particular,
\begin{equation}
  \int_\T e^{2\pi i\Phi_n(x)}\dd x\longrightarrow1.
  \label{eq:phase-integral-one}
\end{equation}

\smallskip
\noindent
\textbf{Step 4: Exclude nonzero winding by oscillation.}
Assume \(d\neq0\). Differentiating \eqref{eq:Phi-def} gives
\[
  \Phi_n'(x)
  =dq_n+\sum_{j=0}^{q_n-1}g'(x-j\theta).
\]
The function \(g'\) has mean zero and bounded variation. Denjoy--Koksma
therefore yields
\[
  \sup_x\left|
    \sum_{j=0}^{q_n-1}g'(x-j\theta)
  \right|
  \leq\Var(g').
\]
Hence \(\inf_x|\Phi_n'(x)|\geq c q_n\) for all large \(n\), with \(c>0\).
Moreover,
\[
  \|\Phi_n''\|_{L^1}
  \leq q_n\|g''\|_{L^1}.
\]

Although \(\Phi_n\) need not be periodic, it satisfies
\[
  \Phi_n(x+1)=\Phi_n(x)+dq_n,
\]
and \(\Phi_n'\) is periodic. Consequently, the boundary term in integration
by parts vanishes:
\[
  \left[
    \frac{e^{2\pi i\Phi_n(x)}}{2\pi i\Phi_n'(x)}
  \right]_{0}^{1}=0.
\]
It follows that
\[
  \left|
    \int_\T e^{2\pi i\Phi_n(x)}\dd x
  \right|
  \leq
  \frac{\|\Phi_n''\|_{L^1}}
       {2\pi(\inf|\Phi_n'|)^2}
  =O(q_n^{-1}).
\]
This contradicts \eqref{eq:phase-integral-one}. Therefore \(d=0\).
\end{proof}

\begin{proposition}[Zero winding on the active arc]
\label{prop:zero-winding}
For every \(s\in U\),
\[
  \wind(A_s)=\wind(B_s)=0.
\]
\end{proposition}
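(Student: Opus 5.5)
The plan is to combine the measurable winding obstruction with the local constancy of winding. The equality \(\wind(A_s)=\wind(B_s)\) for every \(s\in U\) comes first and is immediate. By \eqref{eq:gauged-cocycle}, \(A_s=e^{2\pi i s\theta}B_s\), and the factor \(e^{2\pi i s\theta}\) is a nonzero constant in \(x\). Multiplying a loop by a nonzero constant does not change its winding around the origin. Moreover, for \(s\in U\subset\T\setminus E\), Corollary~\ref{cor:exceptional} guarantees that \(x\mapsto B_s(x)\) is zero-free, so both winding numbers are defined.

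Next I show that the winding vanishes on at least one fibre of \(U\). By Lemma~\ref{lem:active-arc}, \(|S\cap U|>0\), and almost every \(s\in S\cap U\) has two properties: the return equation \eqref{eq:return} holds for almost every \(x\), and \(F(\cdot,s)\neq0\) in \(L^2(\T)\). Pick one such \(s_0\). Then \(h=K_{s_0}\) from \eqref{eq:periodic-gauge} is a genuine \(1\)-periodic function. It lies in \(L^2(\T)\), because \(|K_{s_0}|=|F(\cdot,s_0)|\), and it is nonzero. It also satisfies \(h(x-\theta)=A_{s_0}(x)h(x)\) almost everywhere.

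The multiplier \(A_{s_0}\) is a nonzero constant times the Laurent polynomial \(L(e^{2\pi i s_0},e^{2\pi i x})\) from \eqref{eq:Laurent-return}. It is therefore real analytic, hence \(C^2\), on \(\T\), and it is zero-free there. Since \(\theta=ma\notin\Q\) by Proposition~\ref{prop:return}, Lemma~\ref{lem:winding} applies and gives \(\wind(A_{s_0})=0\).

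To pass from the single fibre \(s_0\) to every \(s\in U\), I use fact (iii) on winding numbers. By Lemma~\ref{lem:active-arc}, \((s,x)\mapsto B_s(x)\) is jointly continuous and zero-free on \(U\times\T\). Hence \(s\mapsto\wind(B_s)\) is locally constant on \(U\), and \(U\) is a connected arc, so this function is constant there. Its value at \(s_0\) is \(0\), so \(\wind(B_s)=\wind(A_s)=0\) for all \(s\in U\).

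The only delicate point is that Lemma~\ref{lem:winding} must be applied to a periodic function. This is why I apply it to the gauged fibre \(K_{s_0}\) rather than to the quasiperiodic fibre \(F(\cdot,s_0)\). The nonvanishing of the fibre must also be available for some \(s\) at which the return equation holds almost everywhere, and Lemma~\ref{lem:active-arc} supplies this through Fubini.
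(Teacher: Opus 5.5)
Your proof is correct and follows the paper's argument. You apply Lemma~\ref{lem:winding} to the periodicized fibre \(K_s\) at an active parameter, propagate zero winding across the connected zero-free arc \(U\) by local constancy, and pass between \(A_s\) and \(B_s\) through the \(x\)-independent factor \(e^{2\pi i s\theta}\). The only difference is cosmetic: you use a single good fibre \(s_0\), whereas the paper records vanishing on a positive-measure set before invoking connectedness, and one fibre suffices.
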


\begin{proof}
For almost every \(s\in S\cap U\), Lemma~\ref{lem:active-arc} and
\eqref{eq:gauged-cocycle} give a nonzero periodic fibre
\(K_s\in L^2(\T)\) satisfying the hypotheses of Lemma~\ref{lem:winding}.
Thus \(\wind(A_s)=0\) on a positive-measure subset of \(U\).

The family \(s\mapsto A_s\) is continuous and zero-free on
\(U\times\T\). Winding is integer-valued and locally constant under such a
homotopy. Since \(U\) is connected, the winding vanishes for every
\(s\in U\). Finally, \(A_s=e^{2\pi i s\theta}B_s\) differs from \(B_s\) by
a nonzero factor independent of \(x\), so their windings agree.
\end{proof}

\section{Holonomy quantization and analytic rigidity}
\label{sec:quantization}

Zero winding makes holonomy well defined. The next lemma supplies the exact
continued-fraction estimate needed to understand it. The conclusion is
stronger than the boundedness given directly by Denjoy--Koksma.

\begin{lemma}[Uniform centred returns for \(C^1\) functions]
\label{lem:centered-return}
Let \(\theta\notin\Q\), let \(q_n\) be its convergent denominators, and let
\(\phi\in C^1(\T;\C)\) satisfy \(\int_\T\phi=0\). Then
\begin{equation}
  \sup_{x\in\T}
  \left|
    \sum_{j=0}^{q_n-1}\phi(x-j\theta)
  \right|
  \longrightarrow0.
  \label{eq:centered-return-zero}
\end{equation}
\end{lemma}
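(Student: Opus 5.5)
The plan is to combine Denjoy--Koksma with a density argument, so that no fine small-divisor analysis is needed. Let \(\varepsilon>0\). First approximate \(\phi\) in the \(C^1\) topology by a mean-zero trigonometric polynomial \(\psi\). The natural choice is the Fejér mean \(\psi=\sigma_N\phi\) for large \(N\). Fejér means commute with differentiation, so \(\psi'=\sigma_N(\phi')\). Since \(\phi'\) is continuous, Fejér's theorem gives \(\psi'\to\phi'\) uniformly. Also \(\widehat{\psi}(0)=\widehat{\phi}(0)=0\), so \(\psi\) has mean zero. Fix \(N\) so that \(\|\phi'-\psi'\|_{L^1}\le\|\phi'-\psi'\|_\infty<\varepsilon\).

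Next, control the error part \(g=\phi-\psi\) uniformly in \(n\). This function is \(C^1\), has mean zero, and has total variation \(\Var(g)=\|g'\|_{L^1}<\varepsilon\), applying the estimate to real and imaginary parts separately. The Denjoy--Koksma inequality \eqref{eq:denjoy-koksma} therefore gives
\[
  \sup_{x\in\T}\Bigl|\sum_{j=0}^{q_n-1}g(x-j\theta)\Bigr|\le 2\varepsilon
\]
for every \(n\). The factor \(2\) accounts for the real and imaginary parts.

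Then handle the trigonometric polynomial \(\psi=\sum_{0<|k|\le N}\widehat\psi(k)e^{2\pi ikx}\) exactly. Summing geometric series gives
\[
  \sum_{j=0}^{q_n-1}\psi(x-j\theta)
  =\sum_{0<|k|\le N}\widehat\psi(k)e^{2\pi ikx}\,
   \frac{1-e^{-2\pi ikq_n\theta}}{1-e^{-2\pi ik\theta}}.
\]
There are only finitely many frequencies, and since \(\theta\notin\Q\) each denominator is a fixed nonzero number. By \eqref{eq:cf-return}, \(e^{-2\pi ikq_n\theta}=e^{-2\pi ik(q_n\theta-p_n)}\to1\) for each fixed \(k\). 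Hence the supremum over \(x\) of this finite sum tends to \(0\) as \(n\to\infty\). Combining the two parts yields \(\limsup_n\sup_x|\sum_{j<q_n}\phi(x-j\theta)|\le2\varepsilon\), and \(\varepsilon\) is arbitrary.

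The only delicate point is the approximation step, and it is why the density route is used. A direct approach would either expand \(\phi\) fully in Fourier series, which runs into uncontrolled small divisors \(1-e^{-2\pi ik\theta}\) for large \(k\), or compare the orbit with the grid \(p_n/q_n\), which gives only a bounded, non-vanishing perturbation error. Both difficulties disappear once the tail is small in the variation seminorm, because Denjoy--Koksma is uniform in \(n\). The approximation itself needs only the \(C^1\) convergence of Fejér means, which follows from the continuity of \(\phi'\).
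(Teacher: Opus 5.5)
Your proposal is correct and follows the paper's proof essentially step for step. You approximate \(\phi\) by a mean-zero trigonometric polynomial that is small in variation; you make the Fej\'er route explicit through the uniform convergence \(\sigma_N(\phi')\to\phi'\). You then bound the remainder uniformly in \(n\) by Denjoy--Koksma, and dispose of the finitely many remaining modes with the geometric-sum formula and \(q_n\theta-p_n\to0\).
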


\begin{proof}
\textbf{Step 1: Approximate in variation.}
Fix \(\varepsilon>0\). Choose a zero-mean trigonometric polynomial \(P\) so
that
\[
  \Var(\operatorname{Re}(\phi-P))
  +\Var(\operatorname{Im}(\phi-P))<\varepsilon.
\]
Such an approximation follows by periodic smoothing or Fej\'er
approximation in \(W^{1,1}(\T)\), followed by subtraction of the mean.

\smallskip
\noindent
\textbf{Step 2: Control the approximation error uniformly.}
Applying Denjoy--Koksma to the real and imaginary parts of \(\phi-P\) gives
\[
  \sup_x
  \left|
    \sum_{j=0}^{q_n-1}(\phi-P)(x-j\theta)
  \right|<\varepsilon
\]
for every \(n\).

\smallskip
\noindent
\textbf{Step 3: Compute each Fourier mode.}
For every nonzero integer \(k\),
\[
  \sum_{j=0}^{q_n-1}e^{2\pi i k(x-j\theta)}
  =e^{2\pi i kx}
   \frac{1-e^{-2\pi i kq_n\theta}}
        {1-e^{-2\pi i k\theta}}.
\]
The denominator is nonzero because \(\theta\) is irrational, while the
numerator tends to zero by \eqref{eq:cf-return}. Thus every nonconstant
Fourier mode has return sum converging uniformly to zero. Since \(P\) has
only finitely many modes and has zero constant coefficient, the same is true
for \(P\). Therefore the limsup in \eqref{eq:centered-return-zero} is at most
\(\varepsilon\). Letting \(\varepsilon\downarrow0\) proves the claim.
\end{proof}

\begin{lemma}[Holonomy quantization]
\label{lem:holonomy-quantization}
Let \(0\neq h\in L^2(\T)\), let \(b\in C^1(\T;\C^\times)\), and suppose
\[
  h(x-\theta)=b(x)h(x)
\]
almost everywhere. If \(\ell\) is a continuous periodic logarithm of \(b\),
then
\begin{equation}
  \exp\left(q_n\int_0^1\ell(x)\dd x\right)
  \longrightarrow1.
  \label{eq:holonomy-quantization}
\end{equation}
\end{lemma}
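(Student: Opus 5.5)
The plan is to iterate the cocycle exactly \(q_n\) times and split the accumulated multiplier into a constant part and a centred part. The constant part is governed by the mean of \(\ell\); the centred part is controlled by Lemma~\ref{lem:centered-return}. Put \(M=\int_0^1\ell(x)\dd x\). Since \(b\in C^1(\T;\C^\times)\) and \(\ell\) is a continuous periodic logarithm of \(b\), the function \(\ell\) is \(C^1\): locally it is a branch of \(\log\) composed with \(b\). Hence \(\phi=\ell-M\) is a \(C^1\) periodic function with mean zero.

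\textbf{Step 1: Iterate.} Iterating the equation \(h(x-\theta)=b(x)h(x)\) gives, almost everywhere,
\[
  h(x-q_n\theta)=\prod_{j=0}^{q_n-1}b(x-j\theta)\,h(x)=C_nE_n(x)\,h(x),
\]
where
\[
  C_n=\exp(q_nM),
  \qquad
  E_n(x)=\exp\Bigl(\sum_{j=0}^{q_n-1}\phi(x-j\theta)\Bigr).
\]
By Lemma~\ref{lem:centered-return}, the exponent of \(E_n\) tends to zero uniformly in \(x\). Therefore \(\|E_n-1\|_\infty\to0\), and in particular \(E_n\) is uniformly bounded.

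\textbf{Step 2: Use near-return in \(L^2\).} Since \(h\) is \(1\)-periodic, \(h(\cdot-q_n\theta)=h(\cdot-(q_n\theta-p_n))\). By \eqref{eq:cf-return} and continuity of translation in \(L^2(\T)\),
\[
  \|h(\cdot-q_n\theta)-h\|_2\to0.
\]
A subtle point is that \(\ell\) may have nonzero real part, so \(C_n\) is not a priori of modulus one. I will control \(|C_n|\) through norms. Translation is an isometry, so \(\|C_nE_nh\|_2=\|h\|_2\). Since \(E_n\to1\) uniformly and \(h\neq0\), this gives
\[
  |C_n|\,\|h\|_2\,(1+o(1))=\|h\|_2,
\]
and hence \(|C_n|\to1\).

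\textbf{Step 3: Conclude.} I would write
\[
  (C_n-1)h=C_n(1-E_n)h+\bigl(h(\cdot-q_n\theta)-h\bigr).
\]
The first term has norm at most \(|C_n|\,\|1-E_n\|_\infty\|h\|_2\to0\), since \(|C_n|\) is bounded by Step 2. The second term tends to zero by the near-return estimate. Thus \(|C_n-1|\,\|h\|_2\to0\), and since \(h\neq0\) this gives \(C_n\to1\), which is exactly \eqref{eq:holonomy-quantization}.

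I expect the main obstacle to be the uniform smallness of the centred Birkhoff sums of \(\phi\). This is already supplied by Lemma~\ref{lem:centered-return}; the only point to verify there is that \(\phi\) has the required \(C^1\) regularity. The remaining subtlety is the a priori control of \(|C_n|\) when \(\operatorname{Re}M\neq0\), which the isometry argument in Step 2 handles.
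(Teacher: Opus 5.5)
Your proof is correct. Its skeleton is the same as the paper's: iterate $q_n$ times, split off the mean $M$, remove the centred Birkhoff sum with Lemma~\ref{lem:centered-return}, and use $L^2$ continuity of translation at the near-returns $q_n\theta-p_n\to0$. The one genuine difference is how you control $|e^{q_nM}|$.

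The paper first proves $\operatorname{Re}M=\int_\T\log|b|=0$ directly. It uses ergodicity to see that $h\neq0$ almost everywhere and sets $w=\log|h|$. It then integrates the coboundary equation $w(x-\theta)-w(x)=\log|b(x)|$ after truncation, with dominated convergence, because $w$ need not be integrable. This gives $|e^{q_nM}|=1$ exactly, and the final step then runs as in your Step~3.

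You instead get $|e^{q_nM}|\to1$ from the isometry $\|h(\cdot-q_n\theta)\|_2=\|h\|_2$ together with the uniform convergence $E_n\to1$. Your route is shorter and needs neither ergodicity nor the truncation argument. It also recovers the paper's first step as a by-product: $|e^{q_nM}|=e^{q_n\operatorname{Re}M}$ and $q_n\to\infty$, so $|e^{q_nM}|\to1$ forces $\operatorname{Re}M=0$.

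The paper's route, in exchange, isolates the identity $\int_\T\log|b|=0$ as a standalone consequence of the measurable cocycle equation. That identity uses only rotation invariance of Haar measure and boundedness of $\log|b|$, not the continued-fraction returns.
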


\begin{proof}
\textbf{Step 1: Prove that the mean logarithmic modulus is zero.}
As in Lemma~\ref{lem:winding}, the function \(h\) is nonzero almost
everywhere. Set \(w=\log|h|\), which is finite and measurable almost
everywhere. The modulus equation gives
\begin{equation}
  w(x-\theta)-w(x)=\log|b(x)|.
  \label{eq:log-coboundary}
\end{equation}
The function \(w\) need not be integrable, so we use truncation rather than
integrating \eqref{eq:log-coboundary} directly. Let
\[
  \tau_N(t)=\max(-N,\min(t,N)).
\]
Rotation invariance of Haar measure gives
\[
  \int_\T\bigl(\tau_N(w(x-\theta))-\tau_N(w(x))\bigr)\dd x=0.
\]
Because \(\tau_N\) is 1-Lipschitz,
\[
  |\tau_N(w(x-\theta))-\tau_N(w(x))|
  \leq|w(x-\theta)-w(x)|=|\log|b(x)||.
\]
The right side is bounded. Dominated convergence in \(N\) yields
\begin{equation}
  \int_\T\log|b(x)|\dd x=0.
  \label{eq:mean-log-modulus-zero}
\end{equation}

\smallskip
\noindent
\textbf{Step 2: Separate the mean of the logarithm.}
Put
\[
  c=\int_\T\ell(x)\dd x,
  \qquad
  \phi=\ell-c.
\]
Since \(b\in C^1\) and \(e^\ell=b\), differentiation gives
\(\ell'=b'/b\); thus \(\ell\), and hence \(\phi\), belongs to \(C^1(\T)\).
Then \(\int\phi=0\), and \eqref{eq:mean-log-modulus-zero} implies
\(\operatorname{Re}c=0\). Iteration gives
\begin{equation}
  h(x-q_n\theta)
  =e^{q_nc}
   \exp\left(
     \sum_{j=0}^{q_n-1}\phi(x-j\theta)
   \right)h(x).
  \label{eq:holonomy-iterate}
\end{equation}

\smallskip
\noindent
\textbf{Step 3: Pass to the continued-fraction returns.}
Lemma~\ref{lem:centered-return} shows that the exponential factor involving
\(\phi\) converges uniformly to one. Also,
\(h(\,\cdot-q_n\theta)\to h\) in \(L^2(\T)\) by translation continuity.
Equation \eqref{eq:holonomy-iterate} therefore implies
\[
  \|(e^{q_nc}-1)h\|_2\longrightarrow0.
\]
Since \(h\neq0\), this is equivalent to \(e^{q_nc}\to1\), proving
\eqref{eq:holonomy-quantization}.
\end{proof}

Define the resonance group
\begin{equation}
  G_\theta
  =\{\zeta\in S^1:\zeta^{q_n}\to1\}.
  \label{eq:resonance-group}
\end{equation}

\begin{lemma}[The resonance group is Haar-null]
\label{lem:resonance-null}
The set \(G_\theta\) is a Borel subgroup of \(S^1\) and has Haar measure
zero.
\end{lemma}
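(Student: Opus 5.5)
We prove the two assertions separately: first that \(G_\theta\) is a Borel subgroup, then that it has Haar measure zero.

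\textbf{Subgroup and Borel structure.} If \(\zeta^{q_n}\to1\) and \(\xi^{q_n}\to1\), then \((\zeta\bar\xi)^{q_n}=\zeta^{q_n}\overline{\xi^{q_n}}\to1\), and \(1\in G_\theta\). So \(G_\theta\) is a subgroup. For measurability, write
\[
  G_\theta=\bigcap_{k\ge1}\bigcup_{N\ge1}\bigcap_{n\ge N}
  \{\zeta:|\zeta^{q_n}-1|\le 1/k\}.
\]
Each inner set is closed, so \(G_\theta\) is an \(F_{\sigma\delta}\) set, hence Borel.

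\textbf{Null measure: Fourier route.} Let \(\mu\) be normalized Haar measure on \(S^1\), and suppose \(\mu(G_\theta)>0\). The functions \(g_n(\zeta)=\zeta^{q_n}\) are bounded by one and converge pointwise to \(1\) on \(G_\theta\). By dominated convergence,
\[
  \int_{G_\theta}\zeta^{q_n}\dd\mu(\zeta)\longrightarrow\mu(G_\theta)>0.
\]
The left-hand side equals \(\widehat{\mathbf 1_{G_\theta}}(-q_n)\). Since \(\theta\notin\Q\), the denominators satisfy \(q_n\to\infty\). The Riemann--Lebesgue lemma, applied to \(\mathbf 1_{G_\theta}\in L^1\), then forces these Fourier coefficients to tend to zero. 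This contradiction gives \(\mu(G_\theta)=0\).

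\textbf{Alternative route via Steinhaus.} A Borel subgroup of positive measure is open by the Steinhaus theorem, hence closed, hence all of \(S^1\). But \(S^1\) is not contained in \(G_\theta\): the map \(\zeta\mapsto\zeta^{q_n}\) preserves \(\mu\), so \(\mu\{\zeta:|\zeta^{q_n}-1|<1/2\}\) is a fixed constant \(c<1\) for every \(n\). A Fatou-type argument on \(\liminf_n\{|\zeta^{q_n}-1|<1/2\}\) then shows \(\mu(G_\theta)\le c<1\). This contradicts \(G_\theta=S^1\).

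The only point requiring care is that \(q_n\to\infty\); this holds because the continued fraction of the irrational \(\theta\) is infinite with strictly increasing denominators. I therefore expect no real obstacle, and the Riemann--Lebesgue argument is the one I would present.
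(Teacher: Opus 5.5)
Your proof is correct, and your main (Riemann--Lebesgue) route differs genuinely from the paper's. The paper argues through what you list as your alternative. A Borel subgroup of positive Haar measure is open by Steinhaus, hence all of $S^1$ by connectedness. Dominated convergence applied to $\int_{S^1}\zeta^{q_n}\dd\zeta$, which is exactly $0$ for every $n$, then gives the contradiction. In your alternative, the Fatou bound $\mu(G_\theta)\le c<1$ plays the role of this orthogonality integral.

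Your main route instead applies dominated convergence to $\mathbf 1_{G_\theta}\zeta^{q_n}$ directly, and lets Riemann--Lebesgue replace exact orthogonality by asymptotic vanishing. This removes Steinhaus and connectedness entirely, and it never uses the subgroup property. So it proves more: every measurable set on which $\zeta^{q_n}\to1$ pointwise is Haar-null.

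The price is that you need $|q_n|\to\infty$, which you correctly justify from the strict growth of the convergent denominators. The paper's route needs only $q_n\neq0$, since its full-circle integral vanishes identically. In effect, the paper's computation is the special case $\mathbf 1_{S^1}$ of yours, reached only after Steinhaus has upgraded positive measure to full measure.
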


\begin{proof}
The subgroup property follows directly from the definition, and the
convergence condition expresses \(G_\theta\) as a countable combination of
open conditions, so it is Borel. If it had positive Haar measure, the
Steinhaus theorem for locally compact groups would make it open. Since
\(S^1\) is connected, an open subgroup must be all of \(S^1\).

If \(G_\theta=S^1\), then \(\zeta^{q_n}\to1\) for every \(\zeta\in S^1\).
Dominated convergence would give
\[
  \int_{S^1}\zeta^{q_n}\dd\zeta\longrightarrow1.
\]
The integral on the left is zero for every \(n\), a contradiction.
\end{proof}

\begin{lemma}[Analytic rigidity into the resonance group]
\label{lem:analytic-rigidity}
Let \(I\subset\R\) be a nonempty open interval and let
\(H:I\to\C^\times\) be real analytic. If
\[
  \bigl|\{s\in I:H(s)\in G_\theta\}\bigr|>0,
\]
then \(H\) is constant.
\end{lemma}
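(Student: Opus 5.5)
We argue by contradiction and assume that \(H\) is not constant. The preimage \(H^{-1}(G_\theta)\) is Borel by Lemma~\ref{lem:resonance-null}. The strategy is to use the local structure of a nonconstant real-analytic map to transport a positive-measure set of parameters onto a positive-measure subset of an arc of \(S^1\). Since \(G_\theta\) is a subgroup, a local version of the Steinhaus argument should then force \(G_\theta\) to contain an arc, which is impossible.

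\textbf{Step 1: Reduce to a local diffeomorphism onto the circle.} Points of \(G_\theta\) have modulus one, so \(|H(s)|=1\) on a set of positive measure. The function \(|H|^2\) is real analytic, so the identity theorem gives \(|H|\equiv1\) on \(I\). Shrinking \(I\) to a subinterval that still meets the set in positive measure, we may write \(H(s)=e^{2\pi i\varphi(s)}\) with \(\varphi\) real analytic and real-valued. Since \(H\) is nonconstant, \(\varphi'\) is a nonzero analytic function, so its zeros are isolated and countable. Hence some compact subinterval \(J\) on which \(\varphi'\neq0\) still satisfies \(|\{s\in J:H(s)\in G_\theta\}|>0\). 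On \(J\), the map \(\varphi\) is a \(C^1\) diffeomorphism onto an interval \(\varphi(J)\), with derivative bounded above and below.

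\textbf{Step 2: Transfer positive measure.} A \(C^1\) diffeomorphism with bounded derivative maps sets of positive measure to sets of positive measure. Therefore
\[
  \{t\in\varphi(J):e^{2\pi it}\in G_\theta\}
\]
has positive Lebesgue measure, and its image \(G_\theta\cap H(J)\) has positive Haar measure in \(S^1\). This contradicts Lemma~\ref{lem:resonance-null}, so \(H\) must be constant.

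\textbf{Main obstacle.} The only delicate point is Step 1, namely the reduction from positive measure of \(\{H\in G_\theta\}\) to \(|H|\equiv1\) and then to a region where \(\varphi'\neq0\). This uses the identity theorem twice: once for \(|H|^2-1\), and once to show that \(\{\varphi'=0\}\) is countable and hence null. The measure-transfer step is routine change of variables. Alternatively, one can avoid the modulus argument by noting directly that \(\{H\in G_\theta\}\subset\{|H|=1\}\), which is either all of \(I\) or a discrete set.
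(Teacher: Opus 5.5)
Your proof is correct and follows the paper's argument essentially step for step. The identity theorem applied to \(|H|^2-1\) forces \(H(I)\subset S^1\), and a real-analytic phase lift of \(H\) has a discrete critical set. On compact subintervals where the phase derivative is nonzero, the phase is bi-Lipschitz, so a positive-measure set \(\{H\in G_\theta\}\) would contradict Lemma~\ref{lem:resonance-null}; you push positive measure forward where the paper pulls the null set \(G_\theta\) back, which is the same estimate. The Steinhaus remark in your opening paragraph is not needed, since Lemma~\ref{lem:resonance-null} already supplies the Haar-null conclusion you actually use in Step~2.
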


\begin{proof}
\textbf{Step 1: Force the image onto the unit circle.}
On a set of positive measure, \(H(s)\in G_\theta\subset S^1\). Hence the
real-analytic function \(|H|^2-1\) vanishes on a set of positive measure and
must vanish identically. Thus \(H(I)\subset S^1\).

\smallskip
\noindent
\textbf{Step 2: Exclude a nonconstant analytic phase.}
Because \(I\) is an interval, \(H\) has a real-analytic lift
\[
  H(s)=e^{i\vartheta(s)}.
\]
If \(H\) were nonconstant, then \(\vartheta'\) would be a nonzero
real-analytic function. Its zero set is discrete. The complement is a
countable union of intervals on which \(\vartheta\) is a local
\(C^1\)-diffeomorphism. On every compact subinterval of such an interval, the
inverse is Lipschitz. Since \(G_\theta\) is Haar-null, its angular preimage is
Lebesgue-null, and therefore so is the preimage under \(\vartheta\). Adding
the discrete critical set still gives a null set, contradicting the
positive-measure hypothesis. Hence \(H\) is constant. See
\cite{KrantzParks} for the real-analytic facts used here.
\end{proof}

\section{Analytic and Laurent holonomy}
\label{sec:laurent}

We now combine the measurable cocycle restrictions with the algebraic form of
the return multiplier. We retain the fixed interval
\(I\subset\R\) lifting the active arc \(U\subset\T\) chosen above. All
functions of \(s\) are understood on this lift.

\begin{lemma}[Joint analytic logarithm]
\label{lem:joint-log}
There is a jointly real-analytic function
\[
  \ell:I\times\T\longrightarrow\C
\]
that is periodic in \(x\) and satisfies
\begin{equation}
  A_s(x)=e^{\ell(s,x)}.
  \label{eq:joint-log}
\end{equation}
\end{lemma}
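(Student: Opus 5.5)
We construct the logarithm by fixing a base point and integrating the logarithmic derivative in \(x\). On \(U\times\T\) the function \((s,x)\mapsto A_s(x)=e^{2\pi i s\theta}B_s(x)\) is jointly real analytic and zero-free; this follows from Lemma~\ref{lem:active-arc}, since \(B_s(x)=L(e^{2\pi i s},e^{2\pi i x})\). Pull it back to \(I\times\R\) through the fixed lift \(s\mapsto e^{2\pi i s}\).

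\textbf{Step 1: the base point.} The map \(s\mapsto A_s(0)\) is a real-analytic, nowhere-vanishing function on the simply connected interval \(I\). It therefore has a real-analytic logarithm \(\lambda_0(s)\). Concretely, fix \(s_0\in I\) and a value of \(\log A_{s_0}(0)\), then set
\[
  \lambda_0(s)=\log A_{s_0}(0)+\int_{s_0}^s\frac{\partial_\sigma A_\sigma(0)}{A_\sigma(0)}\dd\sigma .
\]

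\textbf{Step 2: the \(x\)-direction.} Define
\[
  \ell(s,x)=\lambda_0(s)+\int_0^x\frac{\partial_y A_s(y)}{A_s(y)}\dd y,\qquad (s,x)\in I\times\R.
\]
The integrand is jointly real analytic on \(I\times\R\), because it is a quotient of analytic functions with nonvanishing denominator. Integrals of jointly analytic functions over a fixed-endpoint segment are jointly analytic: locally the integrand extends holomorphically to a complex neighbourhood, and one integrates along real segments there. Hence \(\ell\) is jointly real analytic. For each \(s\), the functions \(e^{\ell(s,\cdot)}\) and \(A_s\) have the same logarithmic derivative in \(x\) and agree at \(x=0\), so \(e^{\ell}=A\) on \(I\times\R\).

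\textbf{Step 3: periodicity.} Compute
\[
  \ell(s,x+1)-\ell(s,x)=\int_x^{x+1}\frac{\partial_y A_s}{A_s}\dd y .
\]
This equals \(2\pi i\,\wind(A_s)\), which vanishes for every \(s\in U\) by Proposition~\ref{prop:zero-winding}. The integral is independent of \(x\) because the integrand is periodic. So \(\ell\) descends to \(I\times\T\).

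The main obstacle is exactly this periodicity step. Without Proposition~\ref{prop:zero-winding}, the logarithm \(\ell\) would pick up \(2\pi i\,\wind(A_s)\cdot x\) per period and would fail to be periodic. With zero winding established for every \(s\) in the connected arc, rather than only on the active positive-measure set, the construction is uniform in \(s\) and no choice of branch has to be patched across parameters. The remaining point is to justify joint analyticity of the parameter integral, and the local holomorphic extension handles that.
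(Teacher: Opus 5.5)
Your proposal is correct and follows the paper's route. You build a logarithm of \(A\) on the simply connected strip \(I\times\R\), then use Proposition~\ref{prop:zero-winding}, which holds for every \(s\) in the arc, to make the period increment \(2\pi i\,\wind(A_s)\) vanish so that the logarithm descends to \(I\times\T\). The only difference is cosmetic: you construct the lift explicitly by integrating logarithmic derivatives, first in \(s\) at \(x=0\) and then in \(x\). The paper instead appeals to the covering-space lifting property of \(\exp\) and obtains analyticity from local analytic branches of the logarithm.
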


\begin{proof}
The function \(A:I\times\T\to\C^\times\) is jointly real analytic and
zero-free. Lift it first to the simply connected covering strip
\(I\times\R\). Since the exponential map
\(\exp:\C\to\C^\times\) is a covering map, \(A\) has a continuous logarithm
there. Local analytic inverses of the exponential show that this lift is real
analytic.

The only possible monodromy on the cylinder \(I\times\T\) occurs when the
\(x\)-variable makes one full circuit. The corresponding increment of the
logarithm is
\[
  2\pi i\wind(A_s).
\]
Proposition~\ref{prop:zero-winding} makes this increment zero for every
\(s\in I\). The lifted logarithm is therefore 1-periodic in \(x\) and descends
to \(I\times\T\).
\end{proof}

Define the \emph{periodic holonomy}, namely the holonomy of the cocycle
\(A_s\) obtained after the periodicizing gauge,
\begin{equation}
  H(s)
  =\Hol(A_s)
  =\exp\left(\int_0^1\ell(s,x)\dd x\right),
  \qquad s\in I.
  \label{eq:H-def}
\end{equation}
The function \(H\) is nonzero and real analytic. For almost every
\(s\in S\cap U\), the periodic fibre \(K_s\) is nonzero and satisfies
\eqref{eq:gauged-cocycle}. Lemma~\ref{lem:holonomy-quantization} therefore
gives
\[
  H(s)^{q_n}\longrightarrow1,
\]
so \(H(s)\in G_\theta\) on a positive-measure subset of \(I\). By
Lemma~\ref{lem:analytic-rigidity}, there is a constant \(C\in G_\theta\) such
that
\begin{equation}
  H(s)=C
  \qquad (s\in I).
  \label{eq:H-constant}
\end{equation}

Since
\[
  A_s(x)=e^{2\pi i s\theta}B_s(x),
\]
the function \(\ell(s,x)-2\pi i s\theta\) is a periodic logarithm of
\(B_s(x)\). Let
\[
  W=\{e^{2\pi i s}:s\in I\}\subset S^1
\]
and define the Laurent holonomy
\begin{equation}
  J(e^{2\pi i s})
  =\exp\left(
    \int_0^1\bigl(\ell(s,x)-2\pi i s\theta\bigr)\dd x
  \right).
  \label{eq:J-def}
\end{equation}
Equations \eqref{eq:H-constant} and \eqref{eq:J-def} give the
irrational-character law
\begin{equation}
  J(e^{2\pi i s})=Ce^{-2\pi i\theta s}
  \qquad (s\in I).
  \label{eq:irrational-character-law}
\end{equation}

The next proposition explains the algebraic meaning of Laurent holonomy. We
include the full root-product argument because this is the point where the
analytic and algebraic parts of the proof meet.

\begin{proposition}[Local Laurent holonomy certificate]
\label{prop:laurent-holonomy}
Let \(W\subset S^1\) be a nonempty open arc and let
\[
  L(w,z)\in\C[w^{\pm1},z^{\pm1}]
\]
be nonzero. Suppose
\[
  L(w,z)\neq0
  \qquad (w\in W,\ |z|=1)
\]
and suppose that \(z\mapsto L(w,z)\) has winding zero for every \(w\in W\).
Define
\begin{equation}
  J(w)
  =\exp\left(
    \int_0^1\log L(w,e^{2\pi i x})\dd x
  \right),
  \label{eq:abstract-J}
\end{equation}
where the logarithm is continuous and periodic in \(x\). After possibly
shrinking \(W\), there are Laurent polynomials
\[
  A_0,\ldots,A_D\in\C[w^{\pm1}],
\]
not all zero, such that
\begin{equation}
  \sum_{r=0}^{D}A_r(w)J(w)^r=0
  \qquad (w\in W).
  \label{eq:laurent-certificate}
\end{equation}
\end{proposition}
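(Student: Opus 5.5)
Write \(L(w,z)=z^{-N}\widetilde L(w,z)\) with \(\widetilde L\in\C[w^{\pm1}][z]\) of \(z\)-degree \(d\). Let \(a_d(w)\) be its leading coefficient. Since \(L\neq0\) we may choose \(d\) so that \(a_d\not\equiv0\).

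The plan is to factor \(L\) in \(z\) over a small subarc of \(W\) and evaluate the mean logarithm by Jensen's formula. Shrink \(W\) so that \(a_d\) has no zero there and the discriminant of \(\widetilde L\) in \(z\) is nonvanishing. If the discriminant vanishes identically, first replace \(\widetilde L\) by its squarefree part, keeping track of multiplicities \(m_j\); the argument below goes through with exponents \(m_j\). Both exceptional sets are finite on \(S^1\), so a nonempty open subarc survives. On this subarc the roots \(\zeta_1(w),\ldots,\zeta_d(w)\) are distinct and depend real-analytically on \(w\) by the implicit function theorem. None lies on \(|z|=1\), by the zero-freeness hypothesis. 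Hence the partition
\[
  \mathcal O(w)=\{j:|\zeta_j(w)|>1\}
\]
is constant on the connected arc.

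Jensen's formula applied to \(z\mapsto\widetilde L(w,z)\) gives
\[
  \int_0^1\log|L(w,e^{2\pi ix})|\dd x=\log|a_d(w)|+\sum_{j\in\mathcal O}\log|\zeta_j(w)|.
\]
We also need the imaginary part. The winding-zero hypothesis means that the number of roots inside the unit disc equals \(N\), since the winding of \(z^{-N}\widetilde L\) is \(\#\{|\zeta_j|<1\}-N\). We then factor
\[
  L(w,z)=a_d(w)\prod_{j\in\mathcal O}\bigl(-\zeta_j(w)\bigr)\prod_{j\in\mathcal O}\bigl(1-z/\zeta_j(w)\bigr)\prod_{j\notin\mathcal O}\bigl(1-\zeta_j(w)/z\bigr).
\]
This uses exactly \(N\) inner factors to absorb \(z^{-N}\). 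Each of the last two products has a periodic principal-branch logarithm with zero mean, because its power series in \(z\) or \(z^{-1}\) has no constant term. Therefore
\[
  J(w)=a_d(w)\prod_{j\in\mathcal O}\bigl(-\zeta_j(w)\bigr).
\]
This holds exactly, since the holonomy is independent of the chosen periodic logarithm (\eqref{eq:holonomy-def}).

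\(J\) is thus \(a_d\) times a product of roots indexed by a fixed subset, so it is algebraic over \(\C(w)\). Concretely, consider
\[
  Q(Y)=\prod_{\mathcal S}\Bigl(Y-a_d(w)\prod_{j\in\mathcal S}(-\zeta_j)\Bigr),
\]
where \(\mathcal S\) ranges over all subsets of \(\{1,\ldots,d\}\) of the same size as \(\mathcal O\). The coefficients of \(Q\) are symmetric functions of the roots, hence polynomials in the elementary symmetric functions \(a_{k}(w)/a_d(w)\), times powers of \(a_d\). Clearing denominators by a power of \(a_d\) gives Laurent polynomials \(A_r(w)\) with leading coefficient \(A_D\) a nonzero power of \(a_d\). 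Therefore \(\sum A_rJ^r=0\) on the shrunken \(W\), with not all \(A_r\) zero. This is \eqref{eq:laurent-certificate}.

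The main obstacle is the careful bookkeeping in the factorization step. We must check that the winding-zero hypothesis forces the number of inner roots to be exactly \(N\), so that no residual power \(z^k\) is left: such a power would have no periodic logarithm and would spoil the Jensen evaluation. We must also handle repeated or degenerate roots. I would deal with the latter by shrinking \(W\) away from the finitely many discriminant zeros, or by passing to the squarefree factorization, and not by attempting analytic continuation through collisions.
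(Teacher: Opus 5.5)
Your proposal is correct and follows the paper's route: clear the negative $z$-powers, use the winding-zero hypothesis to count exactly $N$ interior roots, factor $L$ into interior and exterior pieces so that $J=a_d\prod_{j\in\mathcal O}(-\zeta_j)$ (the paper's exterior-root product), and then use algebraicity over $\C(w)$. Your explicit resolvent $Q(Y)$ over all $(d-N)$-subsets is a cleaner rendering of the paper's Step~4. Because $\lvert\mathcal O\rvert=d-N$ is forced pointwise by the winding count, the resolvent also makes the discriminant-based shrinking unnecessary: avoiding the zeros of $a_d$ suffices.
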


\begin{proof}
\textbf{Step 1: Convert the Laurent polynomial in \(z\) to a polynomial.}
Let \(n_-\) be the least exponent of \(z\) occurring in \(L\), and set
\(N=-n_-\). Then
\[
  P(w,z)=z^NL(w,z)=\sum_{j=0}^{d}a_j(w)z^j,
  \qquad a_j\in\C[w^{\pm1}],
\]
is a polynomial in \(z\), and \(a_0,a_d\) are not identically zero. A
nonzero one-variable Laurent polynomial cannot vanish on an open arc of
\(S^1\). We may therefore shrink \(W\) so that
\[
  a_0(w)a_d(w)\neq0
  \qquad(w\in W).
\]

\smallskip
\noindent
\textbf{Step 2: Count and separate the inside roots.}
For fixed \(w\in W\), the argument principle gives
\[
  \#\{\text{zeros of }P(w,\cdot)\text{ in }|z|<1\}
  =\wind(P(w,\cdot))
  =N+\wind(L(w,\cdot))=N,
\]
with multiplicity. In particular, \(N\geq0\). Because no root lies on the
unit circle, Rouch\'e's theorem and compactness on a smaller closed subarc show
that the \(N\) inside roots remain uniformly separated from the \(d-N\)
outside roots. This is the only root-separation statement needed below.

\smallskip
\noindent
\textbf{Step 3: Compute the exponential mean logarithm.}
In a local algebraic splitting field, write
\[
  P(w,z)=a_d(w)\prod_{j=1}^{d}(z-r_j(w)),
\]
where, counting multiplicity,
\[
  |r_j(w)|<1\quad(1\leq j\leq N),
  \qquad
  |r_j(w)|>1\quad(N<j\leq d).
\]
On \(|z|=1\), the cancellation of the \(N\) inside factors with \(z^{-N}\)
gives
\begin{equation}
  L(w,z)
  =a_d(w)
   \prod_{j=1}^{N}(1-r_j(w)z^{-1})
   \prod_{j=N+1}^{d}(z-r_j(w)).
  \label{eq:inside-outside-factorization}
\end{equation}
For \(|r|<1\), the power-series logarithm of \(1-rz^{-1}\) has zero constant
Fourier coefficient. For \(|r|>1\), write
\[
  z-r=-r(1-z/r);
\]
again the logarithm of the second factor has zero mean. Exponentiating the
mean logarithm in \eqref{eq:inside-outside-factorization} therefore gives the
exact identity
\begin{equation}
  J(w)
  =(-1)^{d-N}a_d(w)
   \prod_{j=N+1}^{d}r_j(w).
  \label{eq:exterior-root-product}
\end{equation}
This is the complex mean-logarithm form of Jensen's formula; see
\cite{Ahlfors,Conway}.

\smallskip
\noindent
\textbf{Step 4: Pass from roots to a Laurent relation.}
Every root \(r_j\) is algebraic over \(\C(w)\), because it satisfies
\(P(w,r_j)=0\). Hence the exterior-root product in
\eqref{eq:exterior-root-product}, and therefore \(J\), belongs to a finite
algebraic extension of \(\C(w)\). It satisfies a nontrivial polynomial
relation
\[
  \sum_{r=0}^{D}R_r(w)J(w)^r=0,
  \qquad R_r\in\C(w).
\]
Clear the common denominator and multiply by a power of \(w\). The resulting
coefficients belong to \(\C[w^{\pm1}]\), are not all zero, and give
\eqref{eq:laurent-certificate}. Multiplicities and local permutations of the
roots cause no difficulty: the separated exterior-root product is a local
algebraic branch, and every such branch satisfies the same type of polynomial
relation over \(\C(w)\).
\end{proof}

The algebraic certificate is incompatible with the character law
\eqref{eq:irrational-character-law}.

\begin{lemma}[No Laurent relation for an irrational character]
\label{lem:no-laurent-relation}
Let \(\theta\notin\Q\) and \(C\in\C^\times\). Suppose
\(A_0,\ldots,A_D\in\C[w^{\pm1}]\) satisfy
\begin{equation}
  \sum_{r=0}^{D}
  A_r(e^{2\pi i s})
  \bigl(Ce^{-2\pi i\theta s}\bigr)^r=0
  \label{eq:putative-laurent-relation}
\end{equation}
on a nonempty real interval. Then every \(A_r\) is zero.
\end{lemma}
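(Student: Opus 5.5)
The plan is to expand the left side of \eqref{eq:putative-laurent-relation} as a finite exponential sum in \(s\) and then use linear independence of distinct characters. Write each coefficient as
\[
  A_r(w)=\sum_{k\in F_r}a_{r,k}w^k,
\]
with \(F_r\subset\Z\) finite. The relation then becomes
\[
  \sum_{r=0}^{D}\sum_{k\in F_r}a_{r,k}C^r\,e^{2\pi i(k-r\theta)s}=0
  \qquad(s\in I).
\]
Every term is an entire function of \(s\), so the identity extends from the interval to all of \(\R\), and indeed to \(\C\).

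The key observation is that the frequencies \(k-r\theta\) are pairwise distinct for distinct pairs \((r,k)\). Indeed, suppose \(k-r\theta=k'-r'\theta\) with \(r\neq r'\). Then
\[
  \theta=\frac{k-k'}{r-r'}\in\Q,
\]
which contradicts the hypothesis. If instead \(r=r'\), the equality forces \(k=k'\).

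It then remains to show that finitely many exponentials \(e^{2\pi i\mu s}\) with distinct real frequencies \(\mu\) are linearly independent on a nonempty interval. There are two standard routes.
\begin{itemize}
  \item \emph{Wronskian.} Differentiating the relation repeatedly produces a Vandermonde system in the distinct numbers \(2\pi i\mu\). That matrix is invertible, so every coefficient vanishes.
  \item \emph{Averaging.} Having extended the relation to \(\R\), multiply by \(e^{-2\pi i\mu_0 s}\) and average over \([-T,T]\). As \(T\to\infty\), only the coefficient of \(\mu_0\) survives.
\end{itemize}
Either route shows that every \(a_{r,k}C^r\) is zero. Since \(C\neq0\), every \(a_{r,k}=0\), and hence every \(A_r\) is the zero Laurent polynomial.

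There is no real obstacle here. The only points needing care are the distinctness of the frequencies, which is exactly where \(\theta\notin\Q\) is used, and the passage from the interval to a uniqueness statement. The analytic continuation, or directly the Vandermonde argument on \(I\), handles the latter.
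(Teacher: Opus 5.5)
Your proof is correct and follows essentially the same argument as the paper. Like the paper, you expand into a finite exponential sum with frequencies \(k-r\theta\), use \(\theta\notin\Q\) to make these pairwise distinct, kill every coefficient with the Vandermonde determinant obtained by differentiating at a point, and then use \(C\neq0\) to conclude that every Laurent coefficient of every \(A_r\) vanishes; your averaging route is just an optional alternative to that last step.
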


\begin{proof}
Expand each Laurent polynomial and combine like terms. The left side of
\eqref{eq:putative-laurent-relation} becomes a finite exponential sum
\[
  \sum_{(k,r)\in\mathcal F}
  b_{k,r}e^{2\pi i(k-r\theta)s}.
\]
If
\[
  k-r\theta=k'-r'\theta,
\]
then \((r-r')\theta=k-k'\). Irrationality implies \(r=r'\) and \(k=k'\).
Thus all frequencies are distinct. A finite exponential sum with distinct
real frequencies cannot vanish on an interval unless every coefficient is
zero. One way to see this is to differentiate at one point through the number of terms and
use the resulting Vandermonde determinant. Since \(C\neq0\), the vanishing of
the coefficients \(b_{k,r}\) is exactly the vanishing of all Laurent
coefficients of all \(A_r\).
\end{proof}

\section{Proof of the main theorem}
\label{sec:main-proof}

\begin{proof}[Proof of Theorem~\ref{thm:main}]
Assume that a nontrivial dependence exists.

\smallskip
\noindent
\textbf{Step 1: Normalize the configuration.}
Proposition~\ref{prop:normal-form} transports the configuration to
\[
  0,\quad \lambda_1,\quad \lambda_2,\quad (a,p/m),
\]
where \(\lambda_1,\lambda_2\) form a unimodular integer basis and
\(a\notin\Q\). The three-point theorem forces every coefficient in the
relation to be nonzero.

\smallskip
\noindent
\textbf{Step 2: Produce the irrational return cocycle.}
The Zak transform gives the one-step equation \eqref{eq:one-step}.
Proposition~\ref{prop:return} closes the rational frequency displacement after
\(m\) steps and yields
\[
  F(x-\theta,s)=B_s(x)F(x,s),
  \qquad \theta=ma\notin\Q,
\]
where
\[
  B_s(x)=L(e^{2\pi i s},e^{2\pi i x})
\]
for a nonzero Laurent polynomial \(L\).

\smallskip
\noindent
\textbf{Step 3: Isolate a positive-measure family of zero-free fibres.}
Lemma~\ref{lem:trinomial-zeros} and Corollary~\ref{cor:exceptional} show that
only finitely many transverse parameters can contain a zero of \(B_s\). By
Lemma~\ref{lem:active-arc}, there is a connected arc \(U\) on which \(B\) is
zero-free and on which a positive-measure family of Zak fibres is nonzero.
After applying the periodicizing gauge \eqref{eq:periodic-gauge}, those
fibres satisfy
\[
  K_s(x-\theta)=A_s(x)K_s(x),
  \qquad A_s(x)=e^{2\pi i s\theta}B_s(x).
\]

\smallskip
\noindent
\textbf{Step 4: Force zero winding and constant periodic holonomy.}
The measurable winding obstruction, Lemma~\ref{lem:winding}, gives zero
winding on the active fibres. Homotopy invariance extends this conclusion to
every \(s\in U\). Hence the family admits the joint periodic logarithm of
Lemma~\ref{lem:joint-log}. Holonomy quantization places the holonomy \(H(s)\)
of the periodic cocycle \(A_s\) in \(G_\theta\) on a positive-measure subset.
Since
\(G_\theta\) is Haar-null, analytic rigidity forces
\[
  H(s)=C
\]
throughout the arc.

\smallskip
\noindent
\textbf{Step 5: Recover the irrational character.}
Undoing the gauge---equivalently, passing from \(A_s\) back to the original
return multiplier \(B_s\)---gives the Laurent holonomy
\[
  J(e^{2\pi i s})=Ce^{-2\pi i\theta s}
\]
on the arc.

\smallskip
\noindent
\textbf{Step 6: Contradict Laurent algebraicity.}
Proposition~\ref{prop:laurent-holonomy} gives a nontrivial Laurent-polynomial
relation for \(J\) on a smaller nonempty arc. Substituting the preceding
irrational-character law produces a Laurent relation of the form excluded by
Lemma~\ref{lem:no-laurent-relation}. This contradiction rules out the assumed
dependence and proves the theorem.
\end{proof}

\section{Worked example: Heil's
\texorpdfstring{\(\sqrt2\)}{sqrt(2)}-configuration}
\label{sec:heil-worked}

We now apply the proof architecture to one configuration from
Conjecture~9.2 of Heil's survey. We choose part~(a), because the same
irrational number governs both coordinates of the rogue point before
normalization. An integral symplectic change of variables turns this visible
rational relation into a closed coordinate and leaves a one-step irrational
return. The resulting calculation exhibits the geometry, the Zak cocycle,
the exceptional fibres, and the Laurent contradiction without suppressing
any intermediate step.

\smallskip
\noindent
\textbf{Step 1: Place Heil's system in phase space.}
In Heil's positive-modulation convention, part~(a) asks whether, for every
nonzero \(g\in L^2(\R)\), the system
\begin{equation}
  \mathcal H_{\sqrt2}(g)
  =\left\{
    g(t),\ g(t-1),\ e^{2\pi it}g(t),\
    e^{2\pi i\sqrt2\,t}g(t-\sqrt2)
  \right\}
  \label{eq:heil-system}
\end{equation}
is linearly independent
\cite[Conjecture~9.2(a), p.~173]{HeilSurvey}. Our convention is
\(M_\omega f(t)=e^{-2\pi i\omega t}f(t)\). Thus the corresponding set of
time-frequency points is
\begin{equation}
  \Lambda_H
  =\left\{
    (0,0),\ (1,0),\ (0,-1),\ (\sqrt2,-\sqrt2)
  \right\}.
  \label{eq:heil-points}
\end{equation}
Take the oriented lattice basis
\[
  u=(0,-1),\qquad v=(1,0).
\]
Then \(\sigma(u,v)=1\), and the rogue point satisfies
\[
  (\sqrt2,-\sqrt2)=\sqrt2\,u+\sqrt2\,v,
  \qquad
  \dim_{\Q}\operatorname{span}_{\Q}\{1,\sqrt2,\sqrt2\}=2.
\]
The configuration is therefore critical and mixed arithmetic.

\smallskip
\noindent
\textbf{Step 2: Compute the integral symplectic normal form.}
Consider
\begin{equation}
  C=
  \begin{pmatrix}
    0&-1\\
    1& 1
  \end{pmatrix}
  \in\SL(2,\Z).
  \label{eq:heil-C}
\end{equation}
Since \(\det C=1\), the matrix is symplectic. Direct multiplication gives
\begin{align}
  C(1,0)&=(0,1)=:\lambda_2,\notag\\
  C(0,-1)&=(1,-1)=:\lambda_1,\notag\\
  C(\sqrt2,-\sqrt2)&=(\sqrt2,0).
  \label{eq:heil-C-action}
\end{align}
The vectors \(\lambda_1=(1,-1)\) and \(\lambda_2=(0,1)\) form a
\(\Z\)-basis of \(\Z^2\), with
\(\sigma(\lambda_1,\lambda_2)=1\). The fourth point is now in the mixed
normal form
\[
  \left(a,\frac{p}{m}\right)=(\sqrt2,0),
  \qquad a=\sqrt2,\quad p=0,\quad m=1.
\]
Metaplectic covariance implements \(C\) by a unitary operator; the projective
phases can be absorbed into the coefficients of a putative dependence.
Figure~\ref{fig:heil-normalization} records this normalization geometrically.

\begin{figure}[t]
\centering
\includegraphics[width=\textwidth]
{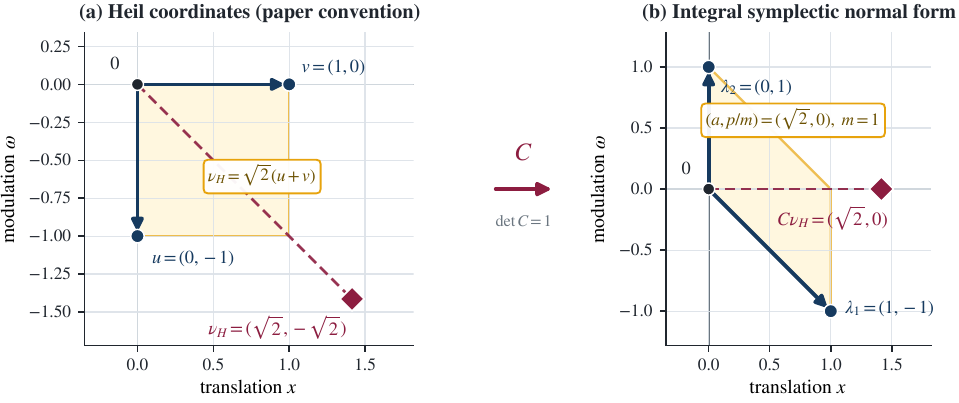}
\caption{The phase-space geometry of Heil's Conjecture~9.2(a), expressed in
the sign convention of this paper. The shaded parallelogram has symplectic
area one. The integral symplectic matrix \(C\) sends the rogue point
\((\sqrt2,-\sqrt2)\) to \((\sqrt2,0)\), so the rational coordinate closes
after one Zak step while the remaining translation is the irrational
rotation by \(\sqrt2\).}
\label{fig:heil-normalization}
\end{figure}

\smallskip
\noindent
\textbf{Step 3: Write the normalized dependence and its Zak equation.}
Suppose that \eqref{eq:heil-system} were dependent. After applying the
metaplectic normalization, there would be a nonzero \(f\in L^2(\R)\) and
coefficients \(c_0,c_1,c_2,c_3\in\C\) such that
\begin{equation}
  c_0f+c_1\pi(1,-1)f+c_2\pi(0,1)f
  +c_3\pi(\sqrt2,0)f=0.
  \label{eq:heil-normalized-dependence}
\end{equation}
The points are distinct, so the three-point theorem forces
\begin{equation}
  c_0c_1c_2c_3\neq0.
  \label{eq:heil-coefficients-nonzero}
\end{equation}
Set \(F=Zf\). For \(\lambda_1=(r_1,t_1)=(1,-1)\) and
\(\lambda_2=(r_2,t_2)=(0,1)\), the lattice trinomial
\eqref{eq:P-def} becomes
\begin{equation}
  P(x,s)
  =c_0+c_1e^{2\pi i(x-s)}+c_2e^{-2\pi ix}.
  \label{eq:heil-P}
\end{equation}
The rogue frequency coordinate is zero. Consequently, the Zak transform of
\eqref{eq:heil-normalized-dependence} is already the exact return equation
\begin{equation}
  F(x-\sqrt2,s)=B_s(x)F(x,s),
  \qquad
  B_s(x)=-c_3^{-1}P(x,s).
  \label{eq:heil-return}
\end{equation}
Writing
\[
  w=e^{2\pi is},\qquad z=e^{2\pi ix},
\]
we obtain the explicit Laurent form
\begin{equation}
  B(w,z)
  =-\frac1{c_3}
    \left(c_0+c_1zw^{-1}+c_2z^{-1}\right)
  =-\frac{c_1z^2+c_0wz+c_2w}{c_3wz}.
  \label{eq:heil-Laurent-return}
\end{equation}
Thus the rational closing parameter is \(m=1\), the irrational return is
\(\theta=\sqrt2\), and the entire return multiplier is a Laurent trinomial.

\smallskip
\noindent
\textbf{Step 4: Isolate the active zero-free arc and periodicize the fibre.}
Because the two nonconstant characters in \eqref{eq:heil-P} form a
unimodular pair, Lemma~\ref{lem:trinomial-zeros} shows that \(P\), and hence
\(B\), has at most two zeros on \(\T^2\). Corollary~\ref{cor:exceptional}
and Lemma~\ref{lem:active-arc} therefore give a connected arc \(U\) on which
\(B_s(x)\neq0\) for every \(x\), while a positive-measure family of Zak
fibres remains nonzero. The canonical gauge
\begin{equation}
  K_s(x)=e^{-2\pi isx}F(x,s)
  \label{eq:heil-periodic-gauge}
\end{equation}
makes the fibre periodic and transforms \eqref{eq:heil-return} into
\begin{equation}
  K_s(x-\sqrt2)=A_s(x)K_s(x),
  \qquad
  A_s(x)=e^{2\pi i\sqrt2s}B_s(x).
  \label{eq:heil-periodic-cocycle}
\end{equation}
The measurable winding obstruction gives \(\wind(B_s)=\wind(A_s)=0\) on
the active arc. Holonomy quantization and analytic rigidity then force the
periodic holonomy of \(A_s\) to equal a constant \(C\in G_{\sqrt2}\).
Undoing the gauge yields
\begin{equation}
  J(e^{2\pi is})=Ce^{-2\pi i\sqrt2s}
  \label{eq:heil-character-law}
\end{equation}
on a nonempty real interval lifting a subarc of \(U\).

\smallskip
\noindent
\textbf{Step 5: Compute the Laurent certificate explicitly.}
For fixed \(w\) in that subarc, set
\begin{equation}
  Q_w(z)=c_1z^2+c_0wz+c_2w.
  \label{eq:heil-Q}
\end{equation}
Equation~\eqref{eq:heil-Laurent-return} shows that
\[
  zB(w,z)=-\frac{1}{c_3w}Q_w(z).
\]
Since \(B(w,\cdot)\) has winding zero, the polynomial
\(zB(w,\cdot)\) has winding one. The argument principle therefore places
exactly one root of \(Q_w\) inside the unit circle and the other outside.
Denote them by \(r_{\mathrm{in}}(w)\) and
\(r_{\mathrm{out}}(w)\), respectively. Factoring gives
\begin{align*}
  B(w,z)
  &=-\frac{c_1}{c_3w}z^{-1}
    \bigl(z-r_{\mathrm{in}}(w)\bigr)
    \bigl(z-r_{\mathrm{out}}(w)\bigr)\\
  &=-\frac{c_1}{c_3w}
    \bigl(1-r_{\mathrm{in}}(w)z^{-1}\bigr)
    \bigl(z-r_{\mathrm{out}}(w)\bigr).
\end{align*}
The first parenthesis has mean logarithm zero. Since
\(|r_{\mathrm{out}}(w)|>1\), write
\[
  z-r_{\mathrm{out}}(w)
  =-r_{\mathrm{out}}(w)
   \left(1-\frac{z}{r_{\mathrm{out}}(w)}\right);
\]
the last parenthesis also has mean logarithm zero. Hence the Laurent holonomy
is
\begin{equation}
  J(w)=\frac{c_1}{c_3w}r_{\mathrm{out}}(w).
  \label{eq:heil-J-root}
\end{equation}
Substituting
\(r_{\mathrm{out}}(w)=c_3wJ(w)/c_1\) into
\(Q_w(r_{\mathrm{out}}(w))=0\) gives the explicit quadratic certificate
\begin{equation}
  c_3^2wJ(w)^2+c_0c_3wJ(w)+c_1c_2=0.
  \label{eq:heil-quadratic-certificate}
\end{equation}
This is the local Laurent-algebra conclusion of
Proposition~\ref{prop:laurent-holonomy}, specialized to Heil's configuration.

\smallskip
\noindent
\textbf{Step 6: Read the contradiction directly.}
Insert \eqref{eq:heil-character-law} and \(w=e^{2\pi is}\) into
\eqref{eq:heil-quadratic-certificate}. We obtain
\begin{equation}
  c_3^2C^2e^{2\pi i(1-2\sqrt2)s}
  +c_0c_3Ce^{2\pi i(1-\sqrt2)s}
  +c_1c_2=0
  \label{eq:heil-three-frequencies}
\end{equation}
on a nonempty interval. The three frequencies
\[
  1-2\sqrt2,\qquad 1-\sqrt2,\qquad 0
\]
are distinct, and every coefficient in
\eqref{eq:heil-three-frequencies} is nonzero by
\eqref{eq:heil-coefficients-nonzero} and \(C\neq0\). A finite exponential
sum with distinct frequencies cannot vanish on an interval. This
contradiction proves that the system \(\mathcal H_{\sqrt2}(g)\) in
\eqref{eq:heil-system} is linearly independent for every nonzero
\(g\in L^2(\R)\).

\smallskip
\noindent
\textbf{Step 7: Visualize the zero set and the root separation.}
For the visualization only, take
\(c_0=c_1=c_2=c_3=1\). No nonzero Zak solution is being asserted. In this
coefficient slice,
\begin{align}
  B_s(x)
  &=-\left(1+e^{2\pi i(x-s)}+e^{-2\pi ix}\right)\notag\\
  &=-\left(1+2e^{-\pi is}\cos(2\pi x-\pi s)\right).
  \label{eq:heil-equal-B}
\end{align}
If \(c=\cos(2\pi x-\pi s)\), then
\begin{equation}
  |B_s(x)|^2=1+4c^2+4c\cos(\pi s).
  \label{eq:heil-B-square}
\end{equation}
The minimum over \(c\in[-1,1]\) occurs at
\(c=-\tfrac12\cos(\pi s)\), and therefore
\begin{equation}
  \min_{x\in\T}|B_s(x)|=|\sin(\pi s)|.
  \label{eq:heil-exact-margin}
\end{equation}
In the fundamental square \([0,1)_x\times[0,1)_s\), the only zeros are
\begin{equation}
  (x,s)=\left(\frac13,0\right),
  \qquad
  (x,s)=\left(\frac23,0\right).
  \label{eq:heil-exact-zeros}
\end{equation}
Thus \(s=0\) is the sole exceptional fibre. In the same slice,
\(Q_w(z)=z^2+wz+w\) and
\eqref{eq:heil-quadratic-certificate} reduces to
\begin{equation}
  wJ(w)^2+wJ(w)+1=0.
  \label{eq:heil-equal-certificate}
\end{equation}
Figures~\ref{fig:heil-return} and \ref{fig:heil-root-holonomy} display these
three aspects of the same calculation at publication scale.

\begin{figure}[t]
\centering
\includegraphics[width=\textwidth]
{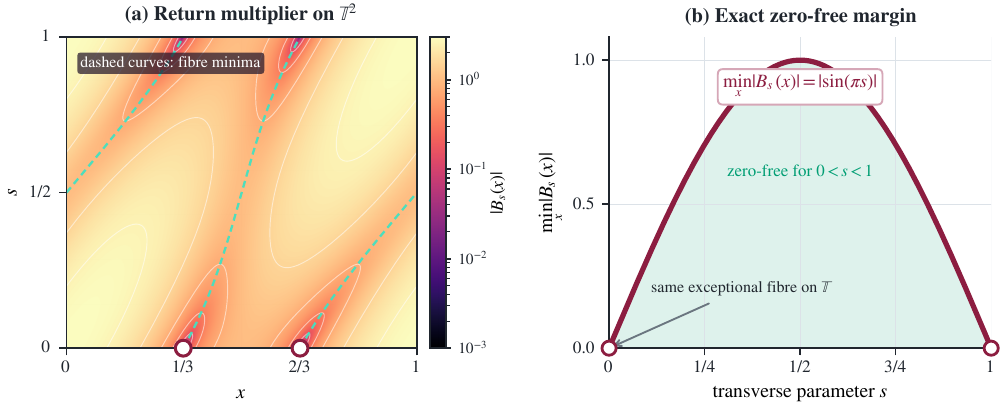}
\caption{The return multiplier in the normalized equal-coefficient slice of
Heil's Conjecture~9.2(a). Panel~(a) shows \(|B_s(x)|\) on a logarithmic colour
scale. The dashed curves are the two exact minimizing branches, and the marked
points \((1/3,0)\) and \((2/3,0)\) are the only zeros in the fundamental
square. Panel~(b) displays the exact fibrewise margin
\(\min_x|B_s(x)|=|\sin(\pi s)|\). It is strictly positive for \(0<s<1\), while
the endpoints represent the same exceptional fibre of \(\T\).}
\label{fig:heil-return}
\end{figure}

\begin{figure}[t]
\centering
\includegraphics[width=\textwidth]
{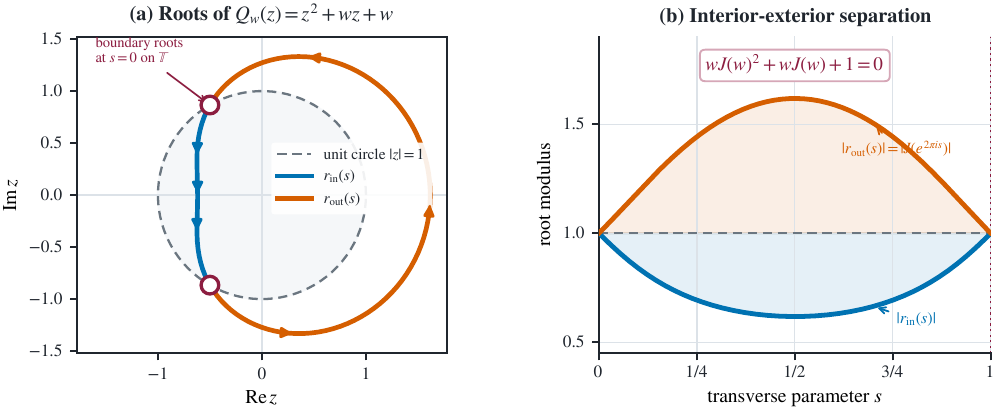}
\caption{The root geometry underlying the holonomy calculation. Panel~(a)
follows the two roots of \(Q_w(z)=z^2+wz+w\) as \(w=e^{2\pi is}\) traverses the
unit circle. Away from the exceptional parameter, one root remains strictly
inside and the other strictly outside \(|z|=1\); the arrows record their
orientations. Panel~(b) isolates the corresponding modulus separation. The
exterior branch defines the holonomy \(J\) and satisfies the exact algebraic
certificate \(wJ(w)^2+wJ(w)+1=0\) from
\eqref{eq:heil-equal-certificate}.}
\label{fig:heil-root-holonomy}
\end{figure}

All three figures are generated reproducibly by
\nolinkurl{anc/generate_heil_conjecture_9_2_figures.py}. The script checks
the symplectic transformation, the two exact zeros, the fibrewise minimum,
the inside--outside root count, and the quadratic holonomy relation before
exporting the vector graphics.

\clearpage
\appendix

\section{Scope and reproducibility of the Lean 4 certification}
\label{app:formalization}

The accompanying supplementary archive \certarchive{} supplies an end-to-end
Lean~4 certification of the formal statement corresponding to
Theorem~\ref{thm:main}. The headline endpoint is
\nolinkurl{CriticalMixedHRT.critical_mixed_orbit_HRT_L2} in
\nolinkurl{RequestProject/MainStatement.lean}. With the convention
\[
  \pi(x,\omega)f(t)=e^{-2\pi i\omega t}f(t-x),
\]
the theorem assumes exactly the critical symplectic condition, distinctness
of \(0,u,v,\nu\), the identity \(\nu=\alpha u+\beta v\), rational rank two,
and a nonzero \(L^2\) window. It concludes that every coefficient in an
almost-everywhere four-term relation is zero.

The certification is not conditional on a normal-form interface. In
particular, the former assumption
\nolinkurl{StandardCriticalMixedNormalForm} is absent from the dependency chain.
The physical all-nonzero relation is ruled out by
\nolinkurl{physical_critical_mixed_all_nonzero_relation_impossible}; the
remaining coefficient cases are discharged through the formally proved
three-point theorem. The final status is summarized in
Table~\ref{tab:lean-status}.

The three-point theorem is itself exported in two unconditional forms:
\nolinkurl{CriticalMixedHRT.three_point_HRT_L2} gives the physical
almost-everywhere coefficient statement for every injective triple, while
\nolinkurl{CriticalMixedHRT.three_point_HRT_L2_linearIndependent} gives literal
complex linear independence of the corresponding vectors in the canonical
\(L^2(\R)\) quotient. Thus the three-point result is a checked theorem in the
dependency chain, not an imported or assumed interface.

\begin{table}[htbp]
\centering
\caption{Certification status of the load-bearing Lean endpoints. Colour is
paired with an explicit textual status so that no information is conveyed by
colour alone.}
\label{tab:lean-status}
\small
\renewcommand{\arraystretch}{1.18}
\begin{tabularx}{0.98\textwidth}{
  >{\raggedright\arraybackslash}p{0.25\textwidth}
  >{\centering\arraybackslash}p{0.18\textwidth}
  >{\raggedright\arraybackslash}X}
\toprule
\rowcolor{HRTHeader}
\textbf{Component} & \textbf{Status} & \textbf{Audited result}\\
\midrule
Physical all-nonzero case
& \cellcolor{HRTPositive}\textbf{Complete}
& A contradiction is proved from the manuscript hypotheses; no assumed
normal-form endpoint is used.\\
Complete three-point HRT theorem
& \cellcolor{HRTPositive}\textbf{Complete}
& Both the physical almost-everywhere formulation and literal complex linear
independence in \(L^2(\R)\) are proved for every injective triple.\\
Three-point coefficient reduction
& \cellcolor{HRTPositive}\textbf{Complete}
& The formally proved three-point theorem shows that a nontrivial four-term
relation would have all four coefficients nonzero.\\
Main theorem
& \cellcolor{HRTPositive}\textbf{Complete}
& \nolinkurl{critical_mixed_orbit_HRT_L2} has a closed proof term and builds
as part of \nolinkurl{RequestProject.Main}.\\
Source hygiene
& \cellcolor{HRTPositive}\textbf{Clean}
& No executable \texttt{sorry}, \texttt{admit}, project \texttt{axiom},
\texttt{unsafe} declaration, or \nolinkurl{implemented_by} escape hatch occurs
in the audited source.\\
Kernel axiom closure
& \cellcolor{HRTPresent}\textbf{Foundational only}
& Lean reports exactly
\texttt{propext}, \texttt{Classical.choice}, and \texttt{Quot.sound};
\texttt{sorryAx} is absent.\\
Exposition, bibliography, and figure
& \cellcolor{HRTPartial}\textbf{Human audited}
& These presentation-level components are deliberately outside the scope of
kernel certification and have been checked separately.\\
\bottomrule
\end{tabularx}
\end{table}

The project is pinned to Lean \texttt{4.28.0} and Mathlib \texttt{v4.28.0},
revision \nolinkurl{8f9d9cff6bd728b17a24e163c9402775d9e6a365}. From the archive root, the
reproducibility commands are
\begin{verbatim}
lake exe cache get
lake build RequestProject.Main
lake env lean RequestProject/ThreePointCertificationAxioms.lean
lake env lean RequestProject/CertificationAxioms.lean
\end{verbatim}
The audited integrated build completed successfully across 8,164 jobs. The full packaged
archive has SHA-256 checksum
\begin{center}
\small\nolinkurl{f12716da6e978ce8054ecbddff1051d79d9f923e89f3fa7c3d3734e4778a62c4}.
\end{center}
This checksum, the toolchain files, the complete axiom report, and the source
hygiene commands are included in the companion. Thus ``end-to-end'' here has
a precise meaning: Lean's kernel checks the formal main theorem from its
stated hypotheses using only the three ordinary foundational principles just
listed. It does not mean that the kernel certifies the historical narrative,
bibliographic metadata, or the natural-language correspondence between every
sentence of this manuscript and the formal source. For background on Lean and
Mathlib, see \cite{deMouraUllrich,Mathlib}.

\section*{Acknowledgments}

The author is grateful to Christopher Heil for the clarity and persistence
with which he has sustained the HRT problem; to Kasso Okoudjou for his
mentorship, advocacy, and mathematical insight; and to Bradley Currey and
Darrin Speegle for the conversations through which the problem first became
part of the author's mathematical life. The author also thanks Akram Aldroubi
and the organizers of ICCHA 2026 for the invitation to participate in the
conference and to contribute to this special issue.

\section*{Statements and declarations}

\noindent
\textbf{Funding.}
This material is based upon work supported by the National Science Foundation
under Grant DMS-2205852, \emph{Collaborative Research: Topics in Abstract,
Applied, and Computational Harmonic Analysis}. Any opinions, findings, and
conclusions or recommendations expressed in this material are those of the
author and do not necessarily reflect the views of the National Science
Foundation.

\smallskip
\noindent
\textbf{Competing interests.}
The author declares that he has no competing financial or non-financial
interests that are directly or indirectly related to this work.

\smallskip
\noindent
\textbf{Data availability.}
No datasets were generated or analyzed in the preparation of this article.

\smallskip
\noindent
\textbf{Code availability.}
The complete Lean~4 certification, its pinned toolchain, build instructions,
axiom audit, and source-hygiene report are supplied as the supplementary
archive \certarchive{}. The reproducible Python source for
Figures~\ref{fig:heil-normalization}, \ref{fig:heil-return}, and
\ref{fig:heil-root-holonomy}, including its internal algebraic and numerical
checks, is included in the manuscript source package.

\smallskip
\noindent
\textbf{Use of generative artificial intelligence.}
This manuscript was prepared with assistance from ChatGPT (OpenAI). ChatGPT
assisted with LaTeX reconstruction, mathematical exploration, adversarial
proof auditing, and the drafting and revision of the exposition. No
language-model output is used as a mathematical premise: the claims rest on
the arguments presented in the paper. ChatGPT is not an author. The human
author reviewed and revised the manuscript and remains responsible for every
mathematical claim and for the final submitted text.

\smallskip
\noindent
\textbf{Author contributions.}
The author is solely responsible for the conceptualization, mathematical
arguments, writing, and final verification of the manuscript.

\end{document}